\newcommand\tx{{\tilde x}}
\newcommand\tX{{\tilde X}}
\newcommand\tI{{\tilde I}}
\newcommand\C{\mathbb{C}}
\renewcommand\P{\mathbb{P}}
\newcommand\R{\mathbb{R}}
\newcommand\Z{\mathbb{Z}}
\newcommand\hx{\widehat{x}}
\theoremstyle{plain}
\newtheorem{Theorem}{Theorem}[section]
\newtheorem{Proposition}[Theorem]{Proposition}
\newtheorem{Corollary}[Theorem]{Corollary}
\newtheorem{Lemma}[Theorem]{Lemma}
\newtheorem*{Conjecture4M-4}{The $\mathbf{4M-4}$ Conjecture \cite{BCMN:14}}
\theoremstyle{definition}
\newtheorem{Example}[Theorem]{Example}
\newtheorem{Remark}[Theorem]{Remark}
\newtheorem*{Constr*}{Construction}
\newtheorem*{Def*}{Definition}
\newtheorem*{Example*}{Example}
\newtheorem*{Remark*}{Remark}
\begin{document}
\title[The geometry of ambiguity in 1D phase retrieval]{The geometry of ambiguity in one-dimensional phase retrieval}
\author{Dan Edidin}
\address{Department of Mathematics, University of Missouri-Columbia, Columbia, Missouri 65211}
\email{edidind@missouri.edu}
\thanks{The author's research was supported in part by  Simons collaboration grant 315460.}
\date{\today}

\begin{abstract}
We consider the geometry associated to
the ambiguities of the one-dimensional Fourier phase retrieval problem
for vectors in $\C^{N+1}$.
Our first result states that
the space of signals has a finite covering (which we call
the {\em root covering}) where any two signals in the covering space with
the same Fourier intensity function differ by a {\em trivial covering
  ambiguity}.

Next we use the root covering to study how the non-trivial ambiguities of
a signal vary as the signal varies. This is done by describing the
incidence variety of pairs of signals with same Fourier intensity
function modulo global phase.  As an application we give a criterion
for a real subvariety of the space of signals to admit generic phase
retrieval. The extension of this result to multi-vectors played an
important role in the author's work with Bendory and Eldar on blind
phaseless short-time Fourier transform recovery.
\end{abstract}

\maketitle

\maketitle

  \section{Introduction}

  The Fourier transform of a vector $x \in \C^{N+1}$ is the
  is the
  polynomial $S^1 \to \C$ defined by the formula
  $\displaystyle{\hat{x}(\omega) = \sum_{n=0}^N x[n]\omega^{n}}$ where
  we take $\omega = e^{-\iota \theta}$ to be a coordinate on the unit
  circle. Clearly, any vector is uniquely determined by its Fourier transform.
  
The phase retrieval problem asks if it possible to uniquely recover a vector
  $x \in \C^{N+1}$ from its Fourier intensity function
  $A(\omega) = |\hat{x}(\omega)|^2$. 
This problem occurs in many indirect measurement systems including crystallography, astronomy and optics.
For a contemporary review of phase retrieval in optical imaging see \cite{SECCMS:15}.

  Unfortunately, this problem is ill-posed. Obviously,  $x$ and $e^{\iota \theta} x$
  have the same Fourier intensity function, as does the vector
  $\dot{x}$ obtained by reflection
and conjugation since
  $\hat{\dot{x}} = \overline{\hat{x}}$. However, even modulo these
  {\em trivial ambiguities} the phase retrieval problem has no
  unique solution \cite{BrSo:79,OpSc:10}. In fact, it is known that for given
  $x$ there are up to $2^{N-1}$ vectors modulo trivial ambiguities
  with the same Fourier intensity function. These vectors are referred
  to as the {\em non-trivial ambiguities} of the phase retrieval
  problem \cite{BePl:15, BePl:18}.

  \begin{Example} \label{ex.ambig}
    Let $x = (9/2, -9,-1/2,1)$. The Fourier transform of $x$
    is the polynomial $\hat{x}(\omega) = 9/2 - 9\omega -(1/2) \omega^2 + \omega^3$ and the Fourier intensity function is
    $$A(\omega) = |\hat{x}(\omega)|^2
    = (9/2) \omega^{-3} -(45/4) \omega^{-2} -(73/2) \omega^{-1} - (73/2) \omega - (45/4)
    \omega^2 + (9/2) \omega^3.$$
    By normalizing so that the first coordinate is positive real, we can eliminate the scaling ambiguity. The reflected vector $\dot{x} = (9/2, 1,-1/2,-9)$
    has Fourier transform $$\hat{\dot{x}} = 9/2 +\omega -(1/2) \omega^2 -9 \omega^3$$ which is the complex conjugate of $\hat{x}(\omega)$ since $\hat{\omega} =
    \omega^{-1}$ for $\omega \in S^1$.
  The real vectors 
  $$\begin{array}{ccl}
    x_2  & = & (9,-9/2,-1,1/2)\\
    x_3 &=& (3/2,-7,13/2,3)\\
    x_4 & = & (3/2,1,-19/2,3)
  \end{array}$$
      all have the same Fourier intensity function as $x$.
      and are unrelated by a trivial ambiguity. In total there are $2^4 =8$ vectors
      with positive first coordinate and Fourier intensity function
            $$(9/2) \omega^{-3} -(45/4) \omega^{-2} -(73/2) \omega^{-1} - (73/2) \omega - (45/4)
      \omega^2 + (9/2) \omega^3.$$
      They are $\{x, \dot{x}, x_1, \dot{x_1}, x_2, \dot{x_2}, x_3, \dot{x_3}\}$.

  Similarly, the real and complex vectors
  $$\begin{array}{ccl}
    x_1 & = & (9/2,9, 1/2,1)\\
    x_2 & = & (3/2,3 +4i, 3/2+ 8i, 3)\\
    x_3 & = & (3/2, 3 - 4i,3/2 - 8i, 3)\\
    x_4 & = &  (9, 9/2, 1, 1/2)
  \end{array}
  $$
    have the same Fourier intensity function 
    $$(9/2) \omega^{-3} + (45/4) \omega^{-2} + (91/2) \omega^{-1} + (205/2) + (91/2) \omega
    + (45/4) \omega^2 + (9/2) \omega^3$$
    and are unrelated by a trivial ambiguity.
  \end{Example}

  The discrete ambiguities of the phase retrieval problem can be understand
  algebraically as follows: If $x \in \C^{N+1}$ has full support
  then by the fundamental theorem of algebra we can factor
  the polynomial $\hat{x}(\omega)$ as $\hat{x}(\omega) = x_N(\omega - \beta_1)\ldots (\omega -\beta_N)$ for some non-zero complex numbers $\beta_1, \ldots , \beta_N$. (Note that the $\beta_i$ are all non-zero because $\displaystyle{x_0 =
    (-1)^N x_N \prod_{i=0}^N \beta_i}$ and $x_0, x_N$ must both be non-zero
  for $x$ to have full support.) Using the fact that $\overline{\omega} = \omega^{-1}$ on $S^1$ the Fourier intensity function factors as
  $$A(\omega) = \overline{x_0} x_N \omega^{-N} (\omega - \beta_1) (\omega - {{1\over {\overline{\beta_1}}})
      \ldots (\omega - \beta_N)(\omega -
             {1\over{\overline{\beta_N}}}}).$$
  In particular, if for each $k =1, \ldots , N$  we choose
  $\gamma_k \in \{\beta_k,
  {1\over{\overline{\beta_k}}}\}$
  then there is a constant  $b \in \C$ (whose modulus is unique)
  such that 
   $|b \prod_{i=1}^N(\omega - \gamma_i)|^2 = A(\omega)$. Reading off
  the coefficients of the polynomial
$b \prod_{i=1}^N(\omega - \gamma_i)$
  gives a new vector
           $x'$ such that $|\hat{x'}(\omega)|^2 = A(\omega)$. If we take
           $\gamma_k = {1\over{{\overline{\beta_k}}}}$ for all $k$
           the new vector is the vector $\dot{x}$ obtained by conjugating and reflecting
           the vector $x$. 
  
  By contrast, the 2D and higher phase retrieval problem is known to
  have a solution for almost all signals  \cite{BrSo:79, 
    KEO:17}. 
  Precisely almost all discrete functions  $f \colon
  \Z_N^2 \to \C$ are uniquely 
  determined modulo trivial ambiguities by the Fourier intensity
  function $|f(\omega, \eta)|^2$. The reason for the difference is
  that for generic (in the sense of complex algebraic geometry) $f$ the Fourier polynomial $f(\omega, \eta)$ is
  irreducible, while in one variable $f(\omega)$ always factors
into  distinct linear factors \cite{HaMc:82}.

Analyzing the possible signals with the same power spectrum naturally
arises in systems theory and digital signal processing. The method of
spectral factorization produces a signal with minimum phase; ie the
solution where $|\gamma_k|$ is minimized.  A similar approach is used
in filter design for more general systems associated to rational
functions, where only the magnitude response is determined
\cite[Section 5.6]{OpSc:10}.

However, without other information there is no reason for the minimum
phase signal to equal the desired signal. A goal of 
this paper is to understand how the possible factorizations vary with
the signal. This question is also related to questions in convex algebraic geometry involving sums of squares. In our case we are determining how the different ways to express the non-negative polynomial $A(\omega)  = |\hat{x}(\omega)|^2$
as a Hermitian
square $|p(\omega)|^2$ vary with $x$.
In the language of \cite{CPSV:17} we are studying how the rank one extreme points of the Gram spectrahedron of the non-negative polynomial
$|\hat{x}(\omega)|^2$ vary as $x$ varies.

To do this  we consider the geometry associated to the ambiguities
of the one-dimensional Fourier phase retrieval problem for vectors in
$\C^{N+1}$.  Our first results (Theorem \ref{thm.coverambig},
\ref{thm.coveruniqueness}) state that the
space of signals has a finite covering (which we call the root
covering) where any two signals in the covering with the same
Fourier intensity differ by a {\em trivial covering ambiguity}.
In other
words, we prove that phase retrieval is possible on the root cover.

Next we use these results to study how the non-trivial ambiguities of a signal vector vary as the
signal varies. To do this we describe (Theorem \ref{thm.incidence})
the {\em incidence variety} $I$
consisting of pairs $(x,x')$ with same Fourier intensity function
modulo global phase. We show that $I$ consists of $N+1$ connected irreducible
components, $I_0, \ldots , I_N$ and that the component $I_k$ is a
finite covering of degree ${N \choose k}$ of the space signals modulo
global phase.

Theorem \ref{thm.convolution} gives a geometric refinement of an earlier result of
Beinert and Plonka \cite[Theorem 2.3]{BePl:15}. Our result states
that the connected irreducible component $I_k$ of the incidence variety $I$
corresponds to pairs $(x,x')$ where
$x = x_1 \star x_2$, $x' = x_1 \star \dot{x}_2$ for some vectors
$x_1 \in \C^{k+1}, x_2 \in \C^{N-k+1}$. Here $x_1 \star x_2$ refers to the
convolution. (See the notation section for the definition
of the convolution.) 

As a consequence, if $k \neq 0, N$, then for a generic pair
$(x,x') \in I_k$, $x'$ is not obtained from $x$ by a trivial
ambiguity. We also prove that if $(x,x') \in I_k$ then $(x, \dot{x}')\in I_{N-k}$ where $\dot{x'}$ is obtained from $x'$ by reflection and
conjugation.

As an application we give (Theorem \ref{Thm:GenericUniqueness}) a criterion for a real subvariety $W$ of the
space of signals to admit generic phase retrieval.
Precisely we prove that if there exists a single signal $w_0 \in W$
with the property that any $w_0' \in W$ with the same Fourier
intensity function is obtained from $w_0$ by a trivial ambiguity
then the generic $w \in W$ has the same property. In other words, the
condition that a signal $w$ lies in the subvariety $W$ enforces uniqueness of
generic phase
retrieval provided there exists a single signal in $W$ with this property.
Examples of interesting $W$ include subvarieties of signals with a fixed
entry or sparse signals \cite{BePl:15, BePl:18}. This result
for tuples of signals was our original motivation 
for writing this paper. It plays a crucial role
in the author's work with Bendory and Eldar \cite{BEE:18}
proving that  a pair of signals can be recovered from their blind phaseless short-time Fourier transform measurements using $\sim 10N$ measurements where $N$ is the
length of the signal.

\subsection{Notation.} To slightly simplify our notation we
assume that our signals are vectors $x  \in \C^{N+1}$ as opposed
to vectors in $\C^N$ which is often used in the literature \cite{BePl:15, BePl:18}.
A vector $x \in \C^{N+1}$ can also be thought of as a function $\Z \to \C$ with support
in the interval $[0,N]$. We use the notation $x[n]$ to refer to the value
of this function at the integer $n$; i.e., if $x = (x_0, x_1, \ldots , x_N)$
then $x[n] = x_n$ for $n \in [0,N]$ and zero otherwise.

If $x \in \C^{N+1}$ then the {\em reflected vector} $x'$ is defined by the
formula $x'[n] = x[N+1-n]$ where the indices are taken modulo $N+1$;
i.e. $(x_0, x_1, \ldots , x_N)' = (x_0, x_N, x_{N-1}, \ldots x_1)$.

All signals $x \in \C^{N+1}$
are assumed to have full support. This means that $x[0], x[N]$
are both assumed to be non-zero. The set of such signals is parametrized by the
  complex variety $ \C^\times \times \C^{N-1} \times \C^\times$ which we view as
  a real variety of dimension $2N+2$. Because we work in $\C^{N+1}$ the Fourier intensity function $|\sum_{n=0}^N x[n]\omega^n|^2$ is a non-negative real trigonometric polynomial
  of degree $2N$ which is uniquely determined by its value at $2N+1$ distinct
  points on the unit circle.

  If $x_1 \in \C^{k+1}$ and $x_2 \in \C^{N-k+1}$ then the convolution
  $x_1 \star x_2$ is the vector in $\C^{N+1}$
  defined by the formula
  $$(x_1 \star x_2)[n] = \sum_{\ell=0}^k x_1[\ell] \overline {x_2[n-\ell]}.$$
  \subsection{Acknowledgments.} The results of this paper were inspired by the papers of Beinert and Plonka \cite{BePl:15, BePl:18} on the topic of ambiguities
  in Fourier phase retrieval. The author is grateful to Tamir Bendory for useful discussions as well as to the referees  for a careful reading and many helpful comments.
  
  \section{Background on algebraic geometry}
  \subsection{Real algebraic sets}
  A real algebraic set is a subset $X= V(f_1, \ldots ,f_r) \subset \R^M$
  defined by the simultaneous vanishing of polynomial equations
  $f_1, \ldots , f_r \in \R[x_1,\ldots, x_M]$.
  Note that any real algebraic set is defined by the single
  polynomial $F = f_1^2 +\ldots + f_r^2$.
Given an algebraic set $X = V(f_1, \ldots , f_m)$ we define the
Zariski topology on $X$ by declaring closed sets to be the
intersections of $X$ with other algebraic subsets of
$\R^M$. An algebraic set is {\em irreducible} if it is not the union
of proper algebraic subsets. An irreducible algebraic set is
called a {\em real algebraic variety}. Every algebraic set has a decomposition
into a finite union of irreducible algebraic subsets \cite[Theorem 2.8.3]{BCR:98}.

An algebraic subset of $ X \subset \R^M$ is irreducible if and only if
the ideal $I(X) \subset \R[x_1, \ldots , x_M]$ of polynomials
vanishing on $X$ is prime.  More generally we declare an arbitrary
subset of $X \subset \R^M$ to be irreducible if its closure in the
Zariski topology is irreducible. This is equivalent to the statement
that $I(X)$ is a prime ideal \cite[Theorem 2.8.3]{BCR:98}.

Note that in real algebraic geometry irreducible algebraic sets need not be
connected in the classical topology. For example the real variety defined by the equation $y^2 - x^3 +x$ consists of two connected components.

\subsection{Semi-algebraic sets and their maps}
In real algebraic geometry it is also natural to consider subsets
of $\R^M$ defined by inequalities of polynomials. A {\em semi-algebraic} subset of $\R^M$ is a finite union of subsets of the form:
\[ \{x \in \R^M; P(x) = 0\; \text{ and }\; \left(Q_1(x)>0, \ldots , Q_{\ell}(x) > 0\right)\} \]
Note that if $f \in R[x_1, \ldots , x_M]$ the set $f(x) \geq 0$
is semi-algebraic since it is the union of the set $f(x) =0$ with the set $f(x) > 0$.

The reason for considering semi-algebraic sets is that the image of an
algebraic set under a real algebraic map need not be real
algebraic. For a simple example consider the algebraic map $\R \to
\R,
x \mapsto x^2$. This map is algebraic but its image is the
semi-algebraic set $\{x\geq 0\} \subset \R$. A basic result in real
algebraic geometry states that the image of a semi-algebraic set under
a polynomial map is semi-algebraic, \cite[Proposition 2.2.7]{BCR:98}.

A map $f \colon X \subset \R^N \to Y \subset \R^M$ of semi-algebraic
sets is {\em semi-algebraic} if the graph $\Gamma_f = \{(x,f(x))\}$ is
a semi-algebraic subset of $\R^N \times \R^M$.  For example the map
$\R_{\geq 0} \to \R_{\geq 0}, x \mapsto \sqrt{x}$ is semi-algebraic
since the graph $\{(x,\sqrt{x})| x\geq 0\}$ is the semi-algebraic
subset of $\R^2$ defined by the equation $x=y^2$ and inequality $x
\geq 0$.  Again the image of a semi-algebraic set under a
semi-algebraic map is semi-algebraic.

\subsection{Dimension of a semi-algebraic sets}
A result in real algebraic geometry
\cite[Theorem 2.3.6]{BCR:98} states that any real semi-algebraic
subset of $\R^n$ admits a semi-algebraic homeomorphism\footnote{A semi-algebraic
  map $f \colon A \to B$ is a semi-algebraic homeomorphism if $f$ is bijective
  and $f^{-1}$ is semi-algebraic.}
to a finite disjoint union of 
hypercubes. Thus we
can define the dimension of a semi-algebraic set $X$
to be the maximal dimension of a hypercube
in this decomposition. This can be shown to be equal to
the Krull dimension of the Zariski closure of $X$ in $\R^M$ \cite[Corollary 2.8.9]{BCR:98}.
As a consequence we obtain the important fact that if $Y$ is a semi-algebraic
subset of an algebraic set $X$ with $\dim Y  < \dim X$ then $Y$ is a contained
in a proper algebraic subset of $X$.

\subsection{Finite coverings of semi-algebraic sets}
Following \cite{Edm:76} we say that a map of $f \colon X \to Y$ of
locally connected, connected Hausdorff topological spaces is {\em a
  finite or ramified cover} if it is open and closed and for all
$y \in Y$, $f^{-1}(y)$
is a finite non-empty set. Define the degree of $f$ to be $\sup \{
|f^{-1}(y)|, y \in Y\}$ with the convention that that $\deg f =
\infty$ if the supremum does not exist.  A result in point-set
topology \cite[Theorem I.10.2.1]{Bou:98} states that these conditions
are equivalent to the map $f$ being {\em proper}\footnote{A map $f
  \colon X \to Y$ of topological spaces is {\em proper} if for any
  topological space $Z$ the induced map $f \colon X \times Z \to Y
  \times Z$ is closed. This is analogous to the notion of universally
  closed in algebraic geometry. When $X,Y$ are locally compact this is
  equivalent to the more familiar notion that the inverse image of any
  compact set is compact, \cite[Proposition I.3.7]{Bou:98}} with
finite fibers.

In this paper all examples of finite coverings come from group actions.
If $X$ is a 
connected Hausdorff topological space and $G$ is a finite
group acting discretely on $X$ then set of orbits $X/G$ is also a
Hausdorff topological space and the orbit map $f \colon X \to X/G$ is a
finite covering. This follows from a result in general topology
\cite[Proposition III.4.2.2]{Bou:98} that states if
$G$ is compact (for example finite) then $X/G$ is Hausdorff and the orbit
map $X \to X/G$ is proper.

If $G$ acts almost freely, meaning that the set of points with trivial stabilizer is dense, then the degree of $f$
is $|G|$ because the fibers are orbits and the assumption implies
a dense set of orbits
has cardinality equal to $|G|$. 
A key fact about finite coverings is the following:

\begin{Proposition}
  Let $X\subset \R^N$, $Y \subset \R^M$ be semi-algebraic sets and let
  $f \colon X \to Y$ be a semi-algebraic map which is a finite covering.
  Then $\dim X = \dim Y$.
\end{Proposition}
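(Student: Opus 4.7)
The plan is to prove both inequalities $\dim Y \leq \dim X$ and $\dim X \leq \dim Y$ using standard results on dimensions of semi-algebraic sets from \cite{BCR:98}.

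First I would observe that $f$ is automatically surjective: by the definition of a finite covering used in the paper, $f^{-1}(y)$ is a non-empty finite set for every $y \in Y$, so $f(X) = Y$. Combined with the standard fact that the image of a semi-algebraic set under a semi-algebraic map has dimension no larger than that of the source (this is a direct consequence of \cite[Theorem 2.8.8]{BCR:98}), we immediately obtain $\dim Y = \dim f(X) \leq \dim X$.

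For the reverse inequality $\dim X \leq \dim Y$ I would use the fiber-dimension theorem for semi-algebraic maps: if $g\colon A \to B$ is a semi-algebraic map whose fibers all have dimension at most $d$, then $\dim A \leq \dim g(A) + d$ (see \cite[Proposition 2.8.5 and Theorem 2.8.8]{BCR:98}). In our situation each fiber $f^{-1}(y)$ is a finite set, hence a zero-dimensional semi-algebraic set, so $\dim X \leq \dim Y + 0 = \dim Y$.

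An equivalent route, closer in spirit to the hypercube decomposition invoked earlier in the paper, is to choose a finite semi-algebraic partition $X = X_1 \sqcup \ldots \sqcup X_r$ such that on each piece $f$ restricts to a semi-algebraic homeomorphism onto its image $f(X_i) \subset Y$; such a decomposition exists by the cell-decomposition theorem applied to the graph of $f$. Since semi-algebraic homeomorphisms preserve dimension, $\dim X_i = \dim f(X_i) \leq \dim Y$, and $\dim X = \max_i \dim X_i$ gives the conclusion.

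The main obstacle is essentially nil once the correct tool is in hand: the substance of the statement is packaged into the semi-algebraic fiber-dimension theorem from \cite{BCR:98}. The only subtlety worth noting is that $f$ being a \emph{covering} (as opposed to a map with merely finite image) guarantees surjectivity, which is what rules out the trivial counterexample of a constant map to a point inside a higher-dimensional $Y$.
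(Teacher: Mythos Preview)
Your argument is correct. The paper takes a somewhat different route: it invokes the semi-algebraic triviality theorem \cite[Theorem 9.3.2]{BCR:98} (Hardt's theorem) to partition $Y$ into finitely many semi-algebraic pieces $Y_\ell$ over each of which $f$ is trivial, so that $f^{-1}(Y_\ell)\cong F_\ell\times Y_\ell$ with $F_\ell$ finite; then $\dim f^{-1}(Y_\ell)=\dim Y_\ell$ by invariance of dimension under semi-algebraic homeomorphism, and taking the maximum over $\ell$ gives both inequalities at once. Your primary argument is more elementary: it avoids the trivialization machinery entirely and instead splits the equality into two inequalities, each handled by a basic dimension fact (image does not raise dimension; zero-dimensional fibers force $\dim X\le\dim Y$). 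Your ``equivalent route'' via a cell decomposition of $X$ on which $f$ is injective is closer in spirit to the paper's proof, though the paper stratifies the target rather than the source. Either way the substance is the same invariance-of-dimension principle; your two-inequality version has the virtue of using the weakest tools that suffice, while the paper's version makes the local product structure of a finite cover explicit. One minor caution: double-check the precise BCR references for the fiber-dimension inequality, as the numbering you cite may not match the statement you need.
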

\begin{proof}
  By the semi-algebraic triviality theorem \cite[Theorem
    9.3.2]{BCR:98}, $Y$ can be partitioned into a finite number of
  semi-algebraic sets $Y_1, \ldots, Y_r$ such that $f^{-1}(Y_\ell)$ is
  homeomorphic to $F_\ell \times Y_\ell$, where $F_\ell$ is the fiber
  over a point of $Y_\ell$.  Since $f$ is a finite cover, $F_\ell$ is
  a finite set and we conclude that $\dim f^{-1}(Y_\ell) = \dim
  Y_\ell$ since two homeomorphic semi-algebraic sets have the same
  dimension \cite[Theorem 2.8.8]{BCR:98}.  This also gives a partition
  of $X$ into a finite number of semi-algebraic sets of the same
  dimensions. Since the partition is finite we necessarily have that
  $\dim Y = \max_\ell \dim Y_\ell$ and likewise $\dim X = \max_\ell
  \dim f^{-1}(Y_\ell)$. Therefore $\dim X = \dim Y$.
  
\end{proof}

\subsection{Finite coverings and quotients by finite groups in complex algebraic geometry}
We briefly consider finite covers of complex algebraic varieties. A complex
algebraic subset 
$X \subset \C^M$ is the subset defined by the simultaneous vanishing of
polynomials $f_1, \ldots , f_r \in \C[x_1, \ldots, x_M]$. As in the real case we define the Zariski topology by declaring algebraic sets to be closed. An algebraic subset which is irreducible is called a variety. Unlike the real case,
any complex algebraic variety is connected \cite[Proposition 3.1.1]{BCR:98}.

If $X$ is a complex algebraic variety then the ring $\C[x_1, \ldots ,
  x_M]/I(X)$ is called the coordinate ring of $X$ where $I(X)$ is the
ideal of functions vanishing on $X$. We denote this ring by $\C[X]$.
The ring $\C[X]$ is the ring of polynomial functions on $X$. Because
$X$ is irreducible, $I(X)$ is a prime ideal so $\C[X]$ is an integral
domain. We denote its field of fractions by $\C(X)$.

Any polynomial map of complex varieties $f \colon X \subset \C^M \to Y
\subset \C^M$ is induced by a ring homomorphism $f^\sharp\colon \C[Y] \to
\C[X]$. It is defined by the formula $f^\sharp(h)(x) = h(f(x))$
where $h \in \C[Y]$. For more details see \cite[Section I.3]{Har:77}.

We say that a map of complex varieties is a {\em finite algebraic
 cover} if the map $f^\sharp$ is injective and $\C[X]$ is
finitely generated as a $\C[Y]$ module. The degree of $f$ is the
degree of the necessarily finite field extension $[\C(X):C(Y)]$. Any
finite algebraic cover $f \colon X \to Y$ of degree $d$ is also a
finite cover in the sense of topology where $X,Y$ are given the
subspace topologies induced by $\C^M$ and $\C^N$ respectively.
To see this we first note that a finite algebraic cover
has finite fibers 
\cite[Exercise 4.1]{Har:77} and is also projective. Hence,
if $f \colon X \subset \C^M \to Y \subset \C^N$ is a finite algebraic map
of varieties, then $X$ can be identified with a closed algebraic subset
of $\P^s \times Y$ for some $s\geq0$.
Since complex projective space can be embedded as a closed and bounded subset
of Euclidean space it is compact \cite[Proposition 3.4.11]{BCR:98}.
Thus the projection $\P^s \times Y \to Y$
is proper as a map of topological spaces. The map $f \colon X \to Y$
is the composition of a closed immersion with a proper map which implies
that it is also proper.

A finite group $G$ acts algebraically on a variety $X$ if for
each $g \in G$ the automorphism $X \to X$, $x \mapsto gx$ is a polynomial
map. In particular, the group $G$ acts on the coordinate ring $\C[X]$.
A fundamental result in invariant theory \cite{Fog:69} states that
the invariant subring $\C[X]^G:= \{h \in \C[X]| g\;h = h\;\; \forall g \in G\}$
is a finitely generated algebra, and that $\C[X]$ is a finitely generated
$\C[X]^G$ module. This means that there is a complex variety $Y$
whose coordinate ring is $\C[X]^G$ and the map $X \to Y$ is a finite cover.
In addition, if we view $Y$ as a subset of $\C^N$ then it can be identified
with the set of orbits  $X/G$.
As in topology, the algebraic degree of the finite cover $X \to X/G$ equals to
the maximal size of an orbit. See \cite[pp 124--126]{Har:95} for more details.
A deep
result on actions of algebraic groups states that the set of points
whose orbits have maximal size is Zariski open.

\section{Phase retrieval on the root covering}
To understand the non-trivial ambiguities we pass to an auxiliary
variety which we call the root covering. It parametrizes all orderings
of the roots of the Fourier polynomials of signals in $\C^\times \times
\C^{N-1} \times \C^\times$.  The root covering has a bigger group of
trivial ambiguities and we demonstrate that every vector in the root
covering is determined modulo trivial ambiguities from the Fourier
intensity function of the corresponding signal.

\subsection{The group of trivial ambiguities of the space of signals}
We begin by identifying a group of trivial ambiguities
acting on $\C^\times \times \C^N \
  \times \C^\times$ which preserves the Fourier intensity function.
   There is a natural free action
  of the circle group $S^1$ on $\C^\times \times \C^{N-1} \times \C^\times$
  where $e^{\iota \theta}$ acts on a vector $x$ by scalar
  multiplication. This action of of $S^1$ clearly preserves the
  Fourier intensity function $|\hx(\omega)|^2$.
    There is also an action of the group $\mu_2=\{\pm 1\}$ where the non-trivial
  element $(-1) \in \mu_2$ takes $x$ to $\dot{x}$ where $\dot{x}$ is
  obtained from $x$ by reflection and conjugation. The action of
  $\mu_2$ is not free since it fixes vectors $x\in \C^{N+1}$ with the property that
  where $x[k] = \overline{x[N-k]}$. However, it also preserves the Fourier
  intensity function since $\hat{\dot{x}} = \overline{\hat{x}}$.

  The group generated by $S^1$ and the conjugation reflection involution
  is the orthogonal group\footnote{The reason we take the semi-direct product rather than the
    product is the actions of $S^1$ and $\Z_2$ do not commute. The
    semi-direct product consists of pairs $(\lambda, \pm 1)$ but the
    multiplication is non-commutative.  Precisely, that $(\lambda,
    -1)(\mu, 1) = (\lambda \overline{\mu}, -1)$ while
    $(\mu,1)(\lambda,-1) = (\lambda \mu, -1)$.} $O(2) = S^1 \ltimes \mu_2$.
  We refer to this group as the group of {\em trivial ambiguities} of
  the phase retrieval problem. In classical Fourier phase retrieval
  (cf. \cite[Proposition 2.1]{BePl:15}) shifts are also considered
  to be trivial ambiguities. However, we eliminate the shift ambiguity from
  the outset by assuming our signals have fixed support $[0,N]$.

  The basic difficulty in phase retrieval is that the map
  $$(\C^\times  \times (\C^{N-1}) \times \C^\times)/(S^1 \ltimes \mu_2) \to \R^{2N+1}_{\geq 0}$$
  $x \mapsto |\hx(\omega)|^2$ is not injective.
  Indeed the generic fiber has $2^{N-1}$ points. The elements of the fiber
  are referred to as {\em non-trivial ambiguities}. In this paper we study
  how the non-trivial ambiguities vary with the signal.

\subsection{The root covering}

If $x \in \C^{N+1}$ with $x[N] \neq 0$, then Fourier transform 
$\hat{x}(\omega) = \sum_{n=0}^Nx[n] \omega^n$ is a polynomial
of degree $N$ on the unit circle. By the
fundamental theorem of algebra we can factor $\hat{x}(\omega) =
a_0(\omega - \beta_1) (\omega -\beta_2) \ldots (\omega -\beta_N)$.  If
we assume that $x$ has full support, then $x[0] = (-1)^Na_0
\beta_1\ldots \beta_N \neq 0$, so none of the roots of
$\hat{x}(\omega)$ are 0.

We denote $\C^\times \times (\C^\times)^N$ parametrizing tuples $(a_0, \beta_1,
\ldots , \beta_N)$ as the {\em root covering} of the space of
signals $\C^\times \times \C^{N-1} \times
\C^\times$.  The reason for this terminology is that we show in Proposition \ref{prop.rootcover}
that the  map $\Phi \colon \C^\times \times (\C^\times)^N \to \C^\times \times
\C^{N-1} \times \C^\times$ defined by the formula
\begin{equation}
(a_0,\beta_1, \ldots , \beta_N) \mapsto a_0\left(e_N(-\beta_1, \ldots ,
  -\beta_N),e_{N-1}(-\beta_1, \ldots , -\beta_N), \ldots
  e_1(-\beta_1, \ldots , -\beta_N),1\right)
  \end{equation}
where $e_n(-\beta_1, \ldots , -\beta_N)$ indicates the $n$-th elementary symmetric polynomial in $(-\beta_1, \ldots , -\beta_N)$ is a finite algebraic covering.

 By construction, the map $\Phi$ associates to the $(N+1)$-tuple $(a_0, \beta_1, \ldots ,
    \beta_N)$ a vector $x= (x_0, x_1, \ldots , x_N)$ whose Fourier
    transform factors as $$\hx(\omega)  = a_0(\omega - \beta_1)(\omega - \beta_2)
    \ldots (\omega - \beta_N).$$

    Note that the map $\Phi$ is multiple-to-one since any permutation of
    $(\beta_1, \beta_2, \ldots , \beta_N)$ produces the same vector.

    \begin{Proposition} \label{prop.rootcover}
      The map $\Phi$ is a finite algebraic covering of degree $N!$.
    \end{Proposition}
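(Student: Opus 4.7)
The plan is to realize $\Phi$ as the quotient map by an obvious action of the symmetric group $S_N$ and then invoke the invariant-theoretic framework recalled in Section 2. Let $X = \C^\times \times (\C^\times)^N$, on which $S_N$ acts algebraically by permuting the coordinates $\beta_1,\ldots,\beta_N$ and fixing $a_0$. Because the elementary symmetric polynomials $e_n(-\beta_1,\ldots,-\beta_N)$ are $S_N$-invariant, the map $\Phi$ factors through the orbit space $X/S_N$, producing an induced map $\overline{\Phi} \colon X/S_N \to \C^\times \times \C^{N-1} \times \C^\times$.

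Next I would check that $\overline{\Phi}$ is a bijection. Surjectivity is the fundamental theorem of algebra: any signal $x$ with full support has $x[N] \neq 0$, so $\hx(\omega)$ has degree exactly $N$, and the relation $x[0] = (-1)^N x[N] \beta_1\cdots \beta_N \neq 0$ forces each root to lie in $\C^\times$. Injectivity modulo $S_N$ is unique factorization of polynomials: $a_0$ is the leading coefficient of $\hx$, and the tuple $(\beta_1,\ldots,\beta_N)$ is determined up to reordering as the multiset of roots of $\hx(\omega)/a_0$.

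Then I would upgrade this bijection to an isomorphism of complex varieties. The coordinate ring $\C[X]$ is the Laurent polynomial ring $\C[a_0^{\pm 1},\beta_1^{\pm 1},\ldots,\beta_N^{\pm 1}]$, and by the fundamental theorem of symmetric functions its $S_N$-invariant subring is generated by $a_0^{\pm 1}$, the elementary symmetric polynomials $e_1(\beta),\ldots,e_N(\beta)$, and $e_N(\beta)^{-1}$ (available because $e_N(\beta) = \beta_1\cdots\beta_N$ is a unit in $\C[X]$). A direct inspection of $\Phi$ shows that the image of $\Phi^\sharp$ on the coordinates of $\C^\times \times \C^{N-1} \times \C^\times$ consists (up to signs and powers of $a_0$) of exactly these generators, so $\Phi^\sharp$ identifies the target coordinate ring with $\C[X]^{S_N}$. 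Combining this with the general invariant theory statement recalled in Section 2 yields that $\Phi$ is a finite algebraic covering.

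Finally, the degree equals the maximum orbit size. Since for generic $(a_0,\beta_1,\ldots,\beta_N)$ the coordinates $\beta_i$ are pairwise distinct, the stabilizer is trivial, so the generic orbit has size $|S_N| = N!$. I expect the main subtlety to be the third step: one must verify that the map $\overline{\Phi}$ is not merely a bijective morphism but an actual isomorphism of varieties, which is why the explicit identification of the invariant ring via the fundamental theorem of symmetric functions (and the observation that $e_N(\beta)$ is invertible on $X$) is essential.
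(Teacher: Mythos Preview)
Your proposal is correct and follows essentially the same approach as the paper: identify $\Phi$ with the $S_N$-quotient map via the fundamental theorem of symmetric polynomials, and read off the degree as the generic orbit size $N!$. The paper streamlines your third step by factoring $\Phi = \sigma \circ \pi$, where $\pi(a_0,\beta) = (e_N(-\beta),\ldots,e_1(-\beta),a_0)$ is the standard $S_N$-quotient on the $\beta$-coordinates and $\sigma(x_0,\ldots,x_N) = (x_Nx_0,\ldots,x_Nx_{N-1},x_N)$ is an explicit isomorphism absorbing the ``multiply by $a_0$'' part; this avoids computing the invariant ring of the Laurent polynomial algebra directly, though your computation of $\C[X]^{S_N}$ is correct and amounts to the same thing.
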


    \begin{Example} \label{ex.rootcover} Consider the vector $x=(9/2,9,1,1/2,1)$. Its Fourier
      transform is $\hat{x}(\omega) = (\omega - 3\iota)(\omega + 3\iota)(\omega + 1/2)$. The inverse image of $x$ in the root cover consists of the 6 vectors
      $$\begin{array}{ccc}
        \tilde{x}_1 & = & (1, 3\iota, -3\iota, -1/2)\\
        \tilde{x}_2 & = & (1,  3\iota, -1/2, -3\iota)\\
        \tilde{x}_3 & = & (1,-3\iota, 3\iota, -1/2)\\
        \tilde{x}_4 & = & (1, -3\iota, -1/2, 3\iota)\\
        \tilde{x}_5 & = & (1,-1/2, 3\iota, -3\iota)\\
        \tilde{x}_6  & = & (1,-1/2, -3\iota, 3\iota).
      \end{array}
      $$
Note that $N+1 =4$ in this example.
      \end{Example}
    \begin{proof}   

   The map $\Phi$ is the composition $\sigma \circ \pi$ where $\pi
   \colon \C^\times \times \C^N \to \C^\times \times \C^N$ is the map $$(a_0,
   \beta_1, \ldots, \beta_N) \mapsto (e_N(-\beta_1, \ldots ,
   -\beta_N), \ldots , e_1(-\beta_1, \ldots, -\beta_N),a_0)$$ and
   $\sigma(x_0, x_1, \ldots, x_N) = (x_Nx_0, x_Nx_1, \ldots, x_Nx_{N-1}, x_N).$
   The map $\sigma$ is an isomorphism of complex varieties with
   inverse given by $(x_0, x_1, \ldots , x_N) \mapsto (x_0x_N^{-1}, x_0x_N^{-1}
, \ldots , x_{N-1}x_N^{-1},x_N)$.

    The map $\pi$ a finite algebraic cover of complex
    varieties of degree $N!$ since it is the $S_N$ quotient map
    $\C^\times \times \C^N \to \C^\times \times \C^N$ where $S_N$ acts by permuting the last $N$ coordinates.
    This fact follows from the classical fundamental theorem of symmetric polynomials.
    It states that every symmetric polynomial can be uniquely expressed
    as a polynomial in the elementary symmetric functions
    and the polynomial ring is a free module over the ring of symmetric functions
    of degree $N!$. For a reference
    see \cite[Chapter 7, Theorem 3]{CLO:15}.

    Since $\sigma$ is an
    isomorphism this means we can identify $\C^\times \times
    \C^{N-1} \times \C^\times$ as the quotient of $(\C^\times)^{N+1}$ by the
    action of the symmetric group $S_N$, where $S_N$ acts by permuting
    the last $N$ factors. Since $S_N$ acts with generically trivial stabilizer 
     $\Phi$ is a finite algebraic covering of degree $N!$.
 \end{proof}
 If we view $\Phi$ as a map of real algebraic varieties then
 Proposition \ref{prop.rootcover} implies that $\Phi$ is a finite covering of degree $N!$ in the sense of real algebraic geometry.
 
 \subsection{The group of ambiguities of the root covering}
 We now consider the group of ambiguities of the root
 cover. Precisely we consider a group $G$ acting faithfully
 on $(\C^\times)^{N+1}$
 such that for all $\tilde{x} \in (\C^\times)^{N+1}$ with $\Phi(\tilde{x})
 = x$ and $g \in G$ with $\Phi(g \tilde{x}) = x'$ then
 $|\hx(\omega)|^2 = |\hx'(\omega)|^2$.

 \begin{Theorem}[The ambiguity group of the root cover] \label{thm.coverambig}
   The group $G = S^1 \ltimes \left((\mu_2)^N \ltimes S_N\right)$ is a group
   of  ambiguities for phase retrieval on $(\C^\times)^{N+1}$.
   \end{Theorem}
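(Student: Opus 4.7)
The plan is to exhibit an explicit action of $G$ on the root covering $(\C^\times)^{N+1}$, verify that it satisfies the defining relations of the semi-direct product, check faithfulness, and show that the Fourier intensity $|\widehat{\Phi(\tilde x)}|^2$ is preserved. I define the generators as follows: $\lambda\in S^1$ acts by scaling the leading coefficient, $\lambda\cdot(a_0,\beta_1,\dots,\beta_N)=(\lambda a_0,\beta_1,\dots,\beta_N)$; a permutation $\sigma\in S_N$ acts by permuting the $\beta_i$; and the $i$-th generator $\tau_i$ of $(\mu_2)^N$ acts by
\[
\tau_i\cdot(a_0,\dots,\beta_i,\dots,\beta_N)=\bigl(|\beta_i|\,a_0,\,\dots,\,1/\overline{\beta_i},\,\dots,\,\beta_N\bigr).
\]

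Checking that these assignments assemble into a group action is essentially bookkeeping. Applying $\tau_i$ twice multiplies $a_0$ by $|\beta_i|\cdot|1/\overline{\beta_i}|=1$ and returns $1/\overline{\beta_i}$ to $\beta_i$, so $\tau_i^2=\mathrm{id}$; for $i\neq j$ the maps $\tau_i,\tau_j$ commute because they act on disjoint root coordinates and the real scalar $|\beta_i|$ is unaffected by a change in $\beta_j$; the relation $\sigma\tau_i\sigma^{-1}=\tau_{\sigma(i)}$ is verified by direct inspection. These relations give the hyperoctahedral factor $(\mu_2)^N\rtimes S_N$, and $S^1$ commutes with it. Faithfulness is immediate since each generator moves a generic point. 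The choice of the real scalar $|\beta_i|$ rather than a complex-analytic alternative such as $\beta_i$ is forced here: any $c_i$ carrying a nontrivial phase in $\beta_i$ either breaks $\tau_i^2=\mathrm{id}$ (for example, $c_i=\beta_i$ yields $\tau_i^2$ multiplying $a_0$ by $\beta_i/\overline{\beta_i}\neq 1$) or else obstructs the commutativity of $\tau_i$ with $\tau_j$; consequently the action is real-analytic (indeed semi-algebraic) rather than complex algebraic.

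The substantive step is verifying intensity preservation. Writing
\[
|\widehat{\Phi(\tilde x)}(\omega)|^2=|a_0|^2\prod_{i=1}^N|\omega-\beta_i|^2,
\]
the invariance under $S^1$ is immediate because $|\lambda|=1$, and invariance under $S_N$ follows from the symmetry of the product. The content of the $\tau_i$ case is the identity
\[
|\omega-\beta|^2=|\beta|^2\,|\omega-1/\overline{\beta}|^2 \qquad\text{for all } \omega\in S^1,
\]
which I would establish by noting that on $S^1$ one has $\overline{\beta}\bigl(\omega-1/\overline{\beta}\bigr)=\overline{\beta}\omega-1=-\omega\,\overline{\omega-\beta}$ and then taking squared moduli. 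Hence the factor $|\beta_i|^2$ produced by scaling $a_0$ by $|\beta_i|$ cancels exactly the $1/|\beta_i|^2$ introduced when $|\omega-\beta_i|^2$ is replaced by $|\omega-1/\overline{\beta_i}|^2$, so the intensity is unchanged. This identity is the only nontrivial point of the proof; everything else is combinatorial verification.
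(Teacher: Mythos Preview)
Your proof is correct and follows essentially the same approach as the paper: you define the identical actions of $S^1$, $S_N$, and the generators $s_i$ (your $\tau_i$) on $(\C^\times)^{N+1}$, and the key computation---your identity $|\omega-\beta|^2=|\beta|^2\,|\omega-1/\overline{\beta}|^2$ on $S^1$---is exactly the content of the paper's factor-by-factor verification that $|\hat{x'}(\omega)|^2=|\hat{x}(\omega)|^2$. Your remark that the scalar must be $|\beta_i|$ rather than $\beta_i$ to make $\tau_i$ an involution matches the paper's parenthetical note that the $\mu_2$-action is only semi-algebraic for this reason.
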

 
We refer to $G$ as the {\em root ambiguity group}.

 \begin{proof}[Proof of Theorem \ref{thm.coverambig}]
 We first describe the Fourier intensity preserving action of $G = S^1
 \times \left( (\mu_2)^N \ltimes S_N\right)$ on $(\C^\times)^{N+1}$.

 The
 action of $S^1$ is given as follows: If
 $\tilde{x} = (a_0, \beta_1, \ldots, \beta_N)$ then
  $\lambda \cdot \tilde{x} = (\lambda a_0, \beta_1, \ldots , \beta_N)$. The
 effect of the action of $S^1$ on $\Phi(\tilde{x})$ is to multiply each
 entry of $\Phi(\tilde{x})$ by the scalar $\lambda$. Since $\lambda
 \in S^1$ this does not change the Fourier intensity function.

 We now describe the action of $(\mu_2)^N \ltimes S_N$.
 The symmetric group $S_N$
 acts by permuting $\beta_1,\ldots , \beta_N$. Since the elementary
 symmetric polynomials are invariant under permutations of $\beta_1,
 \ldots , \beta_N$, if $\tau \in S_N$ then $\Phi(a_0, \beta_1,
 \ldots , \beta_N) = \Phi(a_0, \beta_{\tau(1)}, \ldots ,
 \beta_{\tau(N)})$, so $\Phi(\tau \cdot \tilde{x}) = \Phi(\tilde{x})$.

 The group $(\mu_2)^N$ is generated by elements $s_i = (1,\ldots ,
 1,-1,1,\ldots 1)$ where the $-1$ is in the $i$th position. The
 element $s_i$ acts on $\tilde{x} = (a_0, \beta_1, \ldots , \beta_N)$
 by 
 $$s_i\cdot \tilde{x} = (a_0|\beta|_i, \beta_1, \ldots , \beta_{i-1}, \overline{\beta_i}^{-1}, \beta_{i+1}, \ldots , \beta_N).$$

 The actions of $S_N$ and $\mu_2^N$ do not commute since
 $\tau s_i \tilde{x} = s_{\tau(i)} \tau \tilde{x}$. Thus we have an action
 of the semi-direct product of $\mu_2^N \ltimes S_N$ where $S_N$ acts on $\mu_2^N$ by permutation.
 (Note that the action of $\mu_2$ is only semi-algebraic because we need to multiply by $|a_i|$ in order to ensure that $s_i^2$ acts as the identity.)
 
 Let us verify that if
 $x' = \Phi(s_i \cdot \tilde{x})$ and $x = \Phi(\tilde{x})$ then $x'$
 and $x$ have the same Fourier intensity function.
 The Fourier transform of $x$ is $\hat{x}(\omega) = a_0  \prod_{i=1}^N (\omega - \beta_i)$. Thus
 $$|\hat{x}(\omega)|^2 = |a_0|^2 \prod_{i=1}^N (\omega - \beta_i)(\omega^{-1} -
 \overline{\beta}_i)$$
 while
 $$\hat{x'}(\omega) = a_0 \beta_i (\omega - \beta_1)\ldots (\omega -
 \beta_{i-1}) (\omega - \overline{\beta}_i^{-1})(\omega - \beta_{i+1})
 \ldots (\omega - \beta_N)$$
 so
 \begin{multline*} |\hat{x'}(\omega)|^2 =
 |a_0 \beta_i|^2 (\omega + \beta_1) (\omega^{-1} + \overline{\beta}_1)
 \ldots (\omega + \beta_{i-1})(\omega^{-1} + \overline{\beta}_{i-1})\\
 (\omega + \overline{\beta_i}^{-1})(\omega^{-1}  + \beta_i^{-1}) (\omega + \beta_{i+1})(\omega^{-1} + \overline{\beta}_{i+1}) \ldots (\omega + \beta_N)(\omega^{-1}
 + \overline{\beta}_N)
 \end{multline*}
 Since $$(\omega + \overline{\beta_i}^{-1})(\omega^{-1}  + \beta_i^{-1}) =
 {1\over{\beta_i \overline{\beta}_i}} (\omega^{-1} + \overline{\beta}_i)(\omega + \beta_i)$$
 we see that the two Fourier intensity functions are the same.
 Finally, note the actions of $S_N$ and $\mu_2^N$ do not commute since
 $(\tau s_i) \tilde{x} = (s_{\tau(i)} \tau) \tilde{x}$ which corresponds to an action
 of the semi-direct product $(\mu_2)^N \ltimes S_N$ where $S_N$ acts on $
 (\mu_2)^N$ by permutations.
 \end{proof}
 \begin{Remark}
   The characterization of \cite[Theorem 2.3]{BePl:15} shows that the action of the $G$ covers all trivial and non-trivial ambiguities of the phase retrieval. Thus Theorem \ref{thm.coverambig} is an algebraic representation of the first statement of \cite[Theorem 2.3]{BePl:15}.
   \end{Remark}
\begin{Example}
Consider the vector $\tilde{x}_1 =  (1, 3\iota, -3\iota, -1/2)$ of Example \ref{ex.rootcover}.
Its image under the map $\Phi$ is the vector $(9/2, 9,1/2, 1)$ considered in Example \ref{ex.ambig}.
 Its orbit
under $(\mu_2)^3 \ltimes S_3$ (which is the discrete part of the ambiguity group $G$ when $N = 3$) consists of 48 vectors. Let us see how various group elements act on
$\tilde{x}_1$. 

The element $g=((1,-1,1) , id)$ moves $\tilde{x_1}$ to the vector
$\tilde{x}_1' = (3, 3 \iota, -(1/3)\iota, -1/2).$ Observe that $\Phi(\tilde{x}_1) = (9/2, 9, 1/2,1)$
and that $\Phi(\tilde{x}_1') = (3/2,3+4\iota ,3/2+8\iota,3)$. In the notation of Example \ref{ex.ambig}
this is the vector $x_3$.

The element $h = ((1,1,1), (12))$ moves $\tilde{x}_1$ to the vector $\tilde{x}_3 = (1, -3\iota, 3\iota, -1/2)$. Here $\Phi(\tilde{x}_1) = \Phi(\tilde{x}_2) = (9/2, 9, 1/2,1)$. If
we apply $g$ to $\tilde{x}_3 = h \tilde{x}_1$ we obtain the vector $(3,-3\iota, (1/3)\iota,-1/2)$
whose image under $\Phi$ is the vector $\displaystyle{x_2 = (3/2,3+4\iota,3/2+8\iota,3)}$.

On the other hand
if we apply $h$ to the vector $\tilde{x}_1' = g\tilde{x}$ we obtain the vector
$(3, -(1/3)\iota, 3\iota, -1/2)$. The image of this vector under $\Phi$ is  the vector
$x_3$. 
\end{Example}
 \subsection{Phase retrieval on the root cover}
 Our next result shows that phase retrieval is possible on the root
 coverings modulo its larger group of ambiguities. In other words,
 every vector $\tilde{x} \in (\C^\times)^{N+1}$ can be recovered from the corresponding Fourier intensity function  up to the action of the group
 $G = S^1 \ltimes (\mu_2^N \ltimes S_N)$.
 
  \begin{Theorem}[Phase retrieval on the root cover] \label{thm.coveruniqueness}

   Every $\tilde{x}$ can be uniquely determined modulo the root ambiguity group
   $G$ from the Fourier intensity function of $\Phi(x)$.

   In other words the map
   $(\C^\times)^{N+1}/G \to \R^{2N+1}_{\geq 0}$ which sends the orbit
   of $\tilde{x}$ to the coefficients of the Fourier intensity function
   of $\Phi(x)$ is well-defined and injective.
 \end{Theorem}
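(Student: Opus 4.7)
The plan is to extract from the Fourier intensity function $A(\omega)$ the multiset consisting of the roots of $\hat{x}(\omega)$ together with their conjugate reciprocals, to use the discrete factor $\mu_2^N \ltimes S_N$ of $G$ to align the $\beta$-coordinates of any two tuples yielding the same $A$, and finally to use the $S^1$ factor to align the first coordinates.

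For $\tilde{x} = (a_0, \beta_1, \ldots, \beta_N)$ and $x = \Phi(\tilde{x})$, I would write $\hat{x}(\omega) = a_0 \prod_{i=1}^N (\omega - \beta_i)$ and use $\omega^{-1} = \overline{\omega}$ on $S^1$ to extend $A(\omega) = |\hat{x}(\omega)|^2$ to a Laurent polynomial on $\C^\times$. Multiplying by $\omega^N$ produces the honest degree-$2N$ polynomial
$$P(\omega) := \omega^N A(\omega) = (-1)^N |a_0|^2 \Bigl(\prod_{i=1}^N \overline{\beta_i}\Bigr) \prod_{i=1}^N (\omega - \beta_i)\bigl(\omega - 1/\overline{\beta_i}\bigr),$$
whose multiset of roots is $R(\tilde{x}) := \{\beta_1,\ldots,\beta_N\} \sqcup \{1/\overline{\beta_1},\ldots,1/\overline{\beta_N}\}$ and whose leading coefficient is $(-1)^N |a_0|^2 \prod \overline{\beta_i}$. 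Both $R(\tilde{x})$ and this leading coefficient are thus recovered from $A$.

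Now let $\tilde{x}' = (a_0', \beta_1', \ldots, \beta_N')$ satisfy $A'(\omega) = A(\omega)$, so that $P = P'$ and $R(\tilde{x}) = R(\tilde{x}')$. Since each $\beta_i'$ lies in $R(\tilde{x})$, I choose a permutation $\tau \in S_N$ with $\beta_i' \in \{\beta_{\tau(i)},\, 1/\overline{\beta_{\tau(i)}}\}$ for every $i$, and then apply the generator $s_i \in \mu_2^N$ for each index $i$ with $\beta_i' = 1/\overline{\beta_{\tau(i)}}$. The combined action transforms $\tilde{x}$ into some $\tilde{x}'' = (a_0'', \beta_1'', \ldots, \beta_N'')$ with $\beta_i'' = \beta_i'$ for all $i$. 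Since $G$ preserves $A$, we have $A'' = A = A'$; comparing the leading coefficients $(-1)^N|a_0''|^2\prod \overline{\beta_i''}$ and $(-1)^N |a_0'|^2 \prod \overline{\beta_i'}$ and using $\beta_i'' = \beta_i'$ forces $|a_0''| = |a_0'|$. Hence $\lambda := a_0'/a_0'' \in S^1$, and multiplying by $\lambda$ carries $\tilde{x}''$ to $\tilde{x}'$.

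The one step requiring care is the existence of $\tau$ when some $\beta_i$ coincide or lie on the unit circle, because then the pairing of roots of $P$ into orbits $\{z,1/\overline{z}\}$ is not canonical. What is needed is that any two $N$-tuples $B, B'$ inducing the same multiset $R$ are related by a sequence of involutions $z \mapsto 1/\overline{z}$ applied to individual entries, followed by a permutation; this is the main obstacle. It holds because both $B$ and $B'$ must assign the same total multiplicity to each $\{z, 1/\overline{z}\}$-orbit appearing in $R$, so any discrepancy within an individual orbit is corrected by the corresponding $s_i$, while any discrepancy between orbits is corrected by a permutation $\tau \in S_N$.
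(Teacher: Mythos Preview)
Your proof is correct and follows essentially the same approach as the paper: factor $\omega^N A(\omega)$ into linear factors, match the root tuples up to the involution $z \mapsto 1/\overline{z}$ using $\mu_2^N \ltimes S_N$, and then use the leading coefficient to fix the $S^1$ factor. Your final paragraph addressing the multiset subtlety (coincident roots or roots on the unit circle) is in fact more careful than the paper's treatment, which simply asserts that after reordering one may assume $\beta_i' \in \{\beta_i, \overline{\beta_i}^{-1}\}$ without further comment.
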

\begin{proof}[Proof of Theorem \ref{thm.coveruniqueness}]

  Suppose that $x=\Phi(\tilde{x})$ and $x' =\Phi(\tilde{x'})$ have the same
  Fourier intensity function where
  $\tilde{x} = (a_0, \beta_1 , \ldots , \beta_N)$ and
  $\tilde{x'} = (a'_0, \beta'_1, \ldots , \beta'_N)$.
  We wish to show that $\tilde{x}$ can be obtained from $\tilde{x'}$
  by the action of the root ambiguity group $G$.
  
  Expanding out the Fourier intensity functions we
  have
  $$|\hat{x}(\omega)|^2 =
  \omega^{-N}(|a_0|^2 \prod_{i=1}^N \overline{\beta}_i)\prod_{i=1}^N(\omega -\beta_i)(
  \omega - \overline{\beta}_i^{-1})$$
  and
 $$ |\hat{x'}(\omega)|^2 =
  \omega^{-N}(|a'_0|^2 \prod_{i=1}^N \overline{\beta'}_i)\prod_{i=1}^N(\omega -\beta'_i)(
  \omega - \overline{\beta'}_i^{-1}).$$
  Since the polynomial ring in one-variable is a unique factorization domain we must have that
  $|a_0|^2 \prod \overline{\beta_i} = |a'_0|^2 \prod \overline{\beta'_i}$
  and an equality of sets
  $\{\beta_1, \overline{\beta}_1^{-1}, \ldots, \beta_N, \overline{\beta}_N^{-1}\}
  = \{\beta'_1, \overline{\beta'}_1^{-1}, \ldots , \beta'_N, \overline{\beta'}_N^{-1}\}$.
  Hence after reordering the $\beta_1, \ldots , \beta_N$, which corresponds to
  applying a permutation to $\tilde{x}$, we may assume that
  $\beta'_i \in \{\beta_i, \overline{\beta}_i^{-1}\}$. Let
  $S$ be a subset of $\{1, \ldots , N\}$ such that
  $\beta'_i = \overline{\beta_i}^{-1}$ if $i \in S$ and
  $\beta'_j = \beta_j$ if $j \in S^{c}$.
  (Note that $S$ is uniquely determined if and only if none of the $\beta_i$
  lies on the unit circle.)
  Let $s =\prod_{i \in S} s_i$. Then
   $s \tilde{x'} =
  (a'_0 \prod_{i \in S}\overline{\beta}^{-1}_i, \beta_1, \ldots , 
  \beta_N)$.
  Since $\Phi(s \tilde{x'})$ and $\Phi(\tilde{x})$ have the same Fourier intensity
  function, we conclude that $|a'_0 \prod_{i \in S} \overline{\beta}^{-1}_i| = |a_0|$. Hence there is a scalar $\lambda \in S^1$ such that
  $\lambda s \tilde{x'} = x$.
\end{proof}
  \begin{Remark}
    Theorem \ref{thm.coveruniqueness} is an algebraic characterization of the second statement of \cite[Theorem 2.3]{BePl:15}. That theorem would then imply that any two roots with the same Fourier intensity function can only differ by an action of the root ambiguity group.
    \end{Remark}
  
\subsection{The Fourier intensity map for signals modulo trivial ambiguities}
The space of signals modulo trivial ambiguities is the quotient
of the variety $\C^\times \times \C^{N-1} \times \C^\times$ by the group
$S^1 \ltimes \mu_2$. Since $\C^\times \times \C^{N-1} \times \C^\times$ is the quotient
of $\C^\times \times (\C^\times)^N$ by $S_N$ we can realize the quotient of $\C^\times \times
\C^{N-1} \times \C^\times$ by its group of ambiguities as a quotient of the root cover
$\C^\times \times (\C^\times)^N$.

\begin{Proposition}\label{prop.quotient}
  The space of signals modulo trivial ambiguities
  $(\C^\times \times \C^{N-1} \times \C^\times)/S^1 \ltimes \mu_2$ is homeomorphic to the quotient
  of $(\C^\times \times (\C^\times)^N)$ by a subgroup $H$ of the root ambiguity group  $G$
  of index $2^{N-1}$.
\end{Proposition}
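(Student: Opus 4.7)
The plan is to exhibit an explicit subgroup $H\subset G$ and to prove that $\Phi$ induces a homeomorphism $(\C^\times\times(\C^\times)^N)/H\xrightarrow{\sim}(\C^\times\times\C^{N-1}\times\C^\times)/(S^1\ltimes\mu_2)$. Three pieces of $G$ must clearly lie in $H$: the symmetric group $S_N$, since by Proposition \ref{prop.rootcover} the map $\Phi$ is already the $S_N$-quotient and $S_N$ acts trivially downstairs; the full circle $S^1$, since the $S^1$-action on the root cover (scaling $a_0$) descends to scalar multiplication of signals; and a distinguished element of $\mu_2^N$ that covers the reflection-conjugation involution $x\mapsto\dot x$ on signals. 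The only real question is which element of $\mu_2^N$ to include.

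The first step is to single out $\epsilon:=s_1 s_2\cdots s_N=(-1,-1,\ldots,-1)\in\mu_2^N$. Acting on $\tilde x=(a_0,\beta_1,\ldots,\beta_N)$ it produces $(a_0\prod_i|\beta_i|,\overline{\beta_1}^{-1},\ldots,\overline{\beta_N}^{-1})$. Using the identity $\omega^N\overline{\hat x(\omega)}=(-1)^N\overline{a_0}\prod_i\overline{\beta_i}\cdot\prod_i(\omega-\overline{\beta_i}^{-1})$ valid on $S^1$, together with a comparison of leading coefficients and the fact that $|\hat{\dot x}(\omega)|^2=|\hat x(\omega)|^2$, I would conclude that $\Phi(\epsilon\tilde x)$ and $\dot x$ differ only by a unimodular scalar. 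In other words, together with the $S^1$-action on the root cover, $\epsilon$ covers the non-trivial element of the $\mu_2$-action on signals.

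Since $\epsilon$ is fixed by the $S_N$-action on $\mu_2^N$, it is central in $\mu_2^N\ltimes S_N$, so $H:=S^1\ltimes(\langle\epsilon\rangle\times S_N)$ is a genuine subgroup of $G$. Computing on the discrete parts gives $[G:H]=2^N N!/(2\cdot N!)=2^{N-1}$. To finish the proof I would verify that the composition of $\Phi$ with the $(S^1\ltimes\mu_2)$-quotient map is constant on $H$-orbits (which is automatic from the preceding identifications) and conversely that any two root-cover points whose $\Phi$-images lie in the same $(S^1\ltimes\mu_2)$-orbit already lie in a common $H$-orbit (two cases, related either by $S^1\cdot S_N$ alone, or by composing with $\epsilon$). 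Properness of $\Phi$ (a finite algebraic cover) together with properness of the $(S^1\ltimes\mu_2)$-quotient map (compact-group action on a Hausdorff space) then upgrades the resulting continuous bijection between the two quotients to a homeomorphism.

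The main obstacle is the leading-coefficient calculation that identifies $\Phi(\epsilon\tilde x)$ with $\dot x$ modulo $S^1$. One must track the positive real factor $\prod_i|\beta_i|$ introduced by the semi-algebraic action of $\epsilon$ on $a_0$ and match it against the modulus of the leading coefficient $\overline{x[1]}$ of $\hat{\dot x}$. Once this identification is in hand, the remaining verifications are routine group-action bookkeeping and a standard topological properness argument.
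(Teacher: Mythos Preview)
Your approach is essentially the same as the paper's: you single out the same subgroup $H=S^1\ltimes(\langle(-1,\ldots,-1)\rangle\times S_N)$ and reduce the proposition to identifying the action of $\epsilon$ with the reflection--conjugation involution, which the paper handles by citing \cite[Lemma~2.5]{BePl:18} (the roots of $\hat{\dot x}$ are $\overline{\beta_i}^{-1}$) rather than by your proposed direct leading-coefficient computation. The paper is terser on the topological side---it simply factors the $H$-quotient as the $S_N$-quotient followed by the residual $(S^1\ltimes\mu_2)$-action and leaves the homeomorphism implicit---whereas you spell out the properness argument; both routes are fine.
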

\begin{proof}
  Let $H$ be the subgroup of $G= S^1 \ltimes (\mu_2^N \ltimes S_N)$
  consisting of triples $(\lambda, s, \tau)$ where $\lambda \in S^1$,
  $\tau \in S_N$ and $s = (1, \ldots, 1)$ or $s = (-1, \ldots ,
  -1)$. Since $(-1, \ldots , -1)$ and $(1,\ldots , 1)$ are invariant
  under the action of permutations, this subgroup is isomorphic to the
  semi-direct product $S^1 \ltimes ( \mu_2 \ \times S_N)$. Moreover,
  the group $S_N$ acts trivially on $S^1$ so this semi-direct product
  is the same as $S_N \times (S^1 \ltimes \mu_2)$.  In particular,
  $S_N$ is a normal subgroup. Taking the quotient by the action of
  $S_N$ produces $\C^\times \times \C^{N-1} \times \C^\times$ with a
  residual action of the quotient group $S^1 \ltimes \mu_2$.

  To complete the proof of Proposition \ref{prop.quotient} we need show
  that the involution of $\C^\times \times \C^{N-1} \times \C^\times$ coming
  from $(-1) \in \mu_2$ is the involution $x \mapsto \dot{x}$.
  This follows from the following lemma proved in \cite{BePl:18}. 
  \begin{Lemma}{\cite[Lemma 2.5]{BePl:18}} \label{lem.rootsofthedot}
    If $\beta_1, \ldots , \beta_N$ are the roots of $\hat{x}(\omega)$,
    then the roots of $\hat{\dot{x}}(\omega)$ are
    $\overline{\beta_1}^{-1}, \ldots , \overline{\beta_N}^{-1}$.
\end{Lemma}
\end{proof}  

\begin{Remark}
  One might hope that there is a larger group $G$ of ambiguities acting
  on $\C^\times \times \C^{N-1} \times \C^\times$ such that the Fourier intensity function is injective modulo this group. Such a group would necessarily
  be a quotient of the root ambiguity group $G$ by the symmetric group
  $S_N$. However, $H$ is not a normal subgroup of the full root ambiguity group $G$ so the quotient $G/H$ is not a group. There is  a map of quotients
  $$(\C^\times  \times \C^{N-1} \to \C^\times)/(S^1 \ltimes \mu_2) = (\C^\times\times (\C^\times)^N)/H
  \to (C^\times \times (\C^\times)^N)/G$$
  which is a  $G/H$ covering. This is a finite covering
  of connected, irreducible semi-algebraic sets
degree $|G/H| = 2^{N-1}$ corresponding to the $2^{N-1}$ vectors modulo trivial ambiguities with the same Fourier intensity function.

  Precisely, the Fourier intensity map
  $(\C^\times \times (\C^\times)^N)/H =(\C^\times \times \C^{N-1} \times \C^\times)/(S^1 \ltimes \mu_2) \to \R^{2N+1}_{\geq 0}$
  factors as
  $$\xymatrix{(\C^\times \times \C^{N-1} \times \C^\times)/(S^1 \ltimes \mu_2) \ar[dr] \ar[d] &\\
    (\C^\times \times (\C^\times)^N)/G \ar[r] & \R^N_{\geq 0}}$$
  where the bottom arrow is injective and the diagonal arrow is a finite covering
  of degree $2^{N-1}$.
\end{Remark}

 \section{The incidence variety of ambiguities}

    Let $X$ be the quotient of the space $\C^\times \times \C^{N-1} \times \C^\times$
    by the the free action of $S^1$.
    The semi-algebraic map $(a_0, a_1 \ldots , a_{N-1}, a_N) \to (|a_0|, {\overline{a}_0\over{|a_0|}} a_1, \ldots , {\overline{a}_0 \over{|a_0|}}a_N)$
    identifies $X$ with the 
    semi-algebraic variety
    $\R_{>0} \times \C^{N-1} \times \C^\times$. The space
    $X$ is the space of equivalence classes of signals modulo
    global phase. Since $S^1$ is a normal subgroup of $S^1 \ltimes \mu_2$
    there is an action of $\mu_2$ on $X$. If $x \in X$ is represented by
    $(a_0, a_1, \ldots , a_{N-1}, a_N)$ with $a_0 \in \R_{>0}$, then
    $(-1) \cdot x$ is represented by the vector $(a_0, \overline{a_N},
    \ldots , \overline{a}_1)$. 

Let $I \subset X \times X$ be the subset of pairs $(x,x')$ of
equivalence classes of signals such that $|\hat{x}(\omega)|^2 =
|\hat{x'}(\omega)|^2$. We call $I$ the Fourier intensity incidence
correspondence.  Since $I$ is defined by real algebraic equations we
say that $I$ is a real algebraic subset of the semi-algebraic set $X
\times X$.  The goal of this section is to describe the decomposition
of $I$ into irreducible components.

  \begin{Theorem} \label{thm.incidence}
(i) The real algebraic subset $I \subset X \times X$ decomposes into $N+1$ irreducible components  $I_0, \ldots , I_N$ each of which is connected.

 (ii) The projection $I_k \to X$
    is a finite cover of degree ${N \choose k}$.

 (iii) The total degree of the map
    $I \to X$ is $\displaystyle{\sum_{n=0}^N {N \choose k} = 2^N}$
    and $I_0$ and $I_N$ are both isomorphic to $X$ as semi-algebraic sets.

 (iv) There is an additional action of $\mu_2$ on $I$ given by $(x,x') \mapsto
    (x, \dot{x}')$. Under this action $I_k \mapsto I_{N-k}$.

(v) If $(x,x') \in I_k\smallsetminus (I_k \cap (I_0 \cup I_n))$ 
      then $x'$ is not obtained from $x$ by a trivial ambiguity.
     \end{Theorem}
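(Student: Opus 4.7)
The plan is to pin down $I_0$ and $I_N$ explicitly as the incidence loci of the two types of trivial ambiguities that survive after quotienting by $S^1$; once this identification is in hand, (v) becomes an essentially set-theoretic consequence of parts (i)--(iv).

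First I would observe that, because $X$ is the quotient of the signal space by the free $S^1$-action, the only trivial ambiguities that remain are the identity and the reflection-conjugation involution $x \mapsto \dot{x}$ coming from the $\mu_2$ factor of $S^1 \ltimes \mu_2$. Hence the locus of trivially ambiguous pairs inside $X \times X$ is the union of two real algebraic subvarieties,
$$ T_0 = \Delta_X = \{(x,x) : x \in X\}, \qquad T_1 = \{(x, \dot{x}) : x \in X\}, $$
both contained in $I$ and each isomorphic to $X$ via the first projection. The set $T_1$ is genuinely a real algebraic subvariety because $x \mapsto \dot{x}$ is a real-polynomial map (coordinate reflection composed with componentwise complex conjugation).

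The key step is to show that $\{T_0, T_1\} = \{I_0, I_N\}$. Since $X$ is irreducible, so are $T_0$ and $T_1$, and both have dimension equal to $\dim X$; by (ii) combined with the dimension-preservation statement for finite semi-algebraic covers proved in Section~2, every component $I_k$ also has dimension $\dim X$. Each irreducible $T_i$ lies inside a single component $I_{k_i}$, and the uniqueness of the irreducible decomposition of a real algebraic set \cite[Theorem 2.8.3]{BCR:98} together with the dimension equality $\dim T_i = \dim I_{k_i}$ then forces $T_i = I_{k_i}$. The first projection from each $T_i$ is a bijection, hence of degree $1$, so by (ii) we must have $\binom{N}{k_i} = 1$, i.e.\ $k_i \in \{0, N\}$. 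Since $T_0 \neq T_1$ generically, this yields $\{T_0, T_1\} = \{I_0, I_N\}$; part (iv), under which the involution $(x, x') \mapsto (x, \dot{x}')$ swaps both the pair $\{T_0, T_1\}$ and the pair $\{I_0, I_N\}$, is consistent with the labelling convention $I_0 = T_0$ and $I_N = T_1$.

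With this identification, (v) is immediate: if $(x, x') \in I_k$ is not in $I_0 \cup I_N$, then $(x, x') \notin T_0 \cup T_1$, so $x' \neq x$ and $x' \neq \dot{x}$, which is precisely the statement that $x'$ is not obtained from $x$ by a trivial ambiguity. The one step that demands some care is verifying that $T_0$ and $T_1$ are Zariski-closed subvarieties of full dimension, so that the irreducible-decomposition argument actually gives equality (not mere containment) with a component; both requirements are immediate from the explicit polynomial descriptions of $T_0$ and $T_1$, making this more a matter of bookkeeping than a true obstruction.
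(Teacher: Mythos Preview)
Your argument for (v) is correct and takes a genuinely different route from the paper. The paper argues directly on the root cover: for $(x,x') \in I_k$ it lifts to a pair $(\tilde x,\tilde x') \in \tilde I_S$ with $|S|=k$, observes that when none of the Fourier roots $\beta_i$ lie on the unit circle one has $x'=x$ only if $S=\emptyset$ and $x'=\dot x$ only if $S=\{1,\dots,N\}$ (via Lemma~\ref{lem.rootsofthedot}), and concludes that for $0<k<N$ the \emph{generic} point of $I_k$ is a nontrivial ambiguity. Your approach instead identifies the trivial-ambiguity loci $T_0=\Delta_X$ and $T_1=\{(x,\dot x)\}$ directly inside $X\times X$, and uses only the structural input from (i) and (ii) --- irreducibility of the $I_k$ and the degree formula $\binom{N}{k}$ --- to force $\{T_0,T_1\}=\{I_0,I_N\}$ as sets.

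What each approach buys: the paper's argument is more explicit about the underlying root geometry and makes transparent exactly where the components meet (namely along loci where some $\beta_i$ lie on $S^1$), but as written it only establishes the generic form of (v). Your argument yields the pointwise statement of (v) exactly as phrased, and as a bonus recovers the second clause of (iii) (that $I_0,I_N\cong X$) without further work. The only place to be careful is the step ``$\dim T_i=\dim I_{k_i}$ forces $T_i=I_{k_i}$'': since $X\times X$ is merely semi-algebraic, you should pass to Zariski closures in the ambient $\R^M$, use the standard fact that a proper irreducible algebraic subset of an irreducible algebraic set has strictly smaller dimension, and then intersect back with $X\times X$ --- noting that both $T_i$ and $I_{k_i}$ are cut out of $X\times X$ by polynomial equations and hence agree with such intersections. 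You already flag this as bookkeeping, and it is.
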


    \begin{Remark} We denote the union $\bigcup_{k \neq 0, N} I_k$ by $I^0$. The generic point
    of $I^0$ is a pair of $S^1$-equivalence classes $(x,x')$ such that
    $|\hat{x}(\omega)|^2 = |\hat{x'}(\omega)|^2$ but $x'$ is not obtained from
    $x$ by a trivial ambiguity.
\end{Remark}

  \begin{Remark} For a generic vector $x \in \C^{N+1}$
    there are, modulo
    trivial ambiguities, $2^{N-1}$
    vectors $x'$ such that $|\hat{x}(\omega)|^2 = | \hat{x'}(\omega)|^2$. This follows from our result since the finite covering $(I/\Z_2) \to X$ has degree
    $2^N/2 = 2^{N-1}$ so the generic fiber has $2^{N-1}$ points.
    The $2^{N-1}$ points are partitioned into
    $\lceil (N+1)/2 \rceil$ components corresponding to the possible non-equivalent convolutions $x_1 \star \dot{x_2}$ with
    $x_1 \in \C^{k+1}, x_2 \in \C^{N-k+1}$. See Section
    \ref{sec.convolution} for further discussion.
  \end{Remark}

  \subsection{The incidence correspondence on the root covering}
  To prove Theorem \ref{thm.incidence} we again pass to the root covering.
  
  Let $\tX$ be the quotient of
  $\C^\times \times (\C^\times)^N$ by the free action of $S^1$ on $(\C^\times)^{N+1}$ given by
    $e^{\iota \theta}(a_0, \beta_1, \ldots , \beta_N)=
  (e^{\iota\theta} a_0, \beta_1 , \ldots , \beta_N)$.
  The semi-algebraic map
$$(a_0,  \beta_1, \ldots , \beta_N) \mapsto (|a_0|, \beta_1, \ldots , \beta_N)$$
  identifes $\tX$ with 
    $\R_{>0} \times (\C^\times)^{N}$.

    The map $\Phi$ is $S^1$-equivariant where $S^1$ acts on $(\C^\times)^{N+1}$
    and $\C^\times \times \C^{N-1} \times \C^\times$ as above. The  action
    of $S_N$ also commutes with the $S^1$ action. Hence there is an induced
    map $\tilde{\Phi} \colon \tilde{X} \to X$ which identifies $X$ as the quotient of $\tilde{X}$
    by $S_N$. 
    
    As a consequence of Proposition \ref{prop.rootcover}  we have
    \begin{Proposition}
      The map $\tilde{\Phi}$ is a finite algebraic covering of degree $N!$ \qed
      \end{Proposition}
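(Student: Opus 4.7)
The plan is to derive this Proposition as an immediate consequence of Proposition \ref{prop.rootcover} by passing to $S^1$-quotients. Since $\tilde{X}$ and $X$ were defined as the $S^1$-quotients of $\C^\times \times (\C^\times)^N$ and $\C^\times \times \C^{N-1} \times \C^\times$ respectively, and since $\tilde{\Phi}$ is the map induced by $\Phi$ on these quotients, the task is really just to check that $N!$ is preserved under this descent.

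First I would verify explicitly that $\Phi$ is $S^1$-equivariant. If $\tilde{x} = (a_0, \beta_1, \ldots, \beta_N)$ and $\lambda \in S^1$, then every entry of $\Phi(\tilde{x})$ has $a_0$ as a common factor, so $\Phi(\lambda \tilde{x}) = \lambda \Phi(\tilde{x})$. Because the $S^1$-actions on both source and target are free, the orbit maps to $\tilde{X}$ and $X$ are principal $S^1$-bundles (in the semi-algebraic sense, via the identifications with $\R_{>0} \times (\C^\times)^N$ and $\R_{>0} \times \C^{N-1} \times \C^\times$), and $\tilde{\Phi}$ is well defined on the quotients.

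Next I would identify the deck transformations. The proof of Proposition \ref{prop.rootcover} exhibits $\Phi$ as the quotient map for the $S_N$-action permuting $\beta_1, \ldots, \beta_N$ while fixing $a_0$. This $S_N$-action commutes with scaling $a_0$ by $\lambda \in S^1$, hence descends to an action on $\tilde{X}$ (still by permuting the $\beta_i$'s). The induced map $\tilde{\Phi}$ is then exactly the $S_N$-quotient of $\tilde{X}$. Since the stabilizer of a generic point is trivial (the $\beta_i$ are generically distinct), and since $\tilde{X}$ is irreducible and connected, the degree of the quotient map equals $|S_N| = N!$; by the Proposition on finite coverings of semi-algebraic sets proved earlier, $\dim \tilde{X} = \dim X$ and every fiber is a finite set of size at most $N!$.

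There is no real obstacle here. The one subtlety worth flagging is that $\tilde{\Phi}$ is technically only a semi-algebraic, rather than complex algebraic, covering: passing to the $S^1$-quotient replaces $\C^\times$ by $\R_{>0}$ so we are no longer in the category of complex varieties. However, everything needed—finiteness of fibers, properness, constancy of the generic fiber size—transfers from the complex algebraic covering $\Phi$ through the free $S^1$-quotients, because a free quotient by a compact group preserves properness and divides out a common factor from each fiber. This gives $\tilde{\Phi}$ the structure of a finite covering of degree $N!$ in the semi-algebraic sense used throughout the paper.
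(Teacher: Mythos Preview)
Your proposal is correct and follows exactly the approach the paper intends: the paper states this Proposition with only a \qed, prefaced by ``As a consequence of Proposition~\ref{prop.rootcover},'' having just noted that $\Phi$ is $S^1$-equivariant and that the $S_N$-action commutes with $S^1$, so that $\tilde{\Phi}$ is the induced $S_N$-quotient map. You have simply written out the details the paper leaves implicit, including the useful observation that after the $S^1$-quotient the covering is semi-algebraic rather than complex algebraic.
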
 
    Let  $\tI$ be the following subset of $\tilde{X} \times \tilde{X}$:
      $$\tI := \{(\tx=(a_0, \beta_1, \ldots , \beta_N), \tx'=
    (a'_0, \beta'_1, \ldots , \beta'_N)|  \; |\hat{x}(\omega)|^2  = |\hat{x'}(\omega)|^2
     \; \text{and} \; \forall n\; \beta'_n \in \{ \beta_n, \overline{\beta_n}^{-1}\}\}$$
    where $x = \tilde{\Phi}(\tx)$ and $x' = \tilde{\Phi}(\tx')$.
    We refer to $\tI$ as the {\em root incidence variety}.
Again $\tilde{I}$ is a real algebraic subset of the semi-algebraic set
$\tilde{X} \times \tilde{X}$.
         
    \begin{Proposition}
      The  incidence $\tilde{I}$ decomposes into $2^N$ irreducible components
      each isomorphic via a semi-algebraic isomorphism to $\tilde{X}$
      embedded as the diagonal in $\tilde{X} \times \tilde{X}$. In particular, each irreducible component is connected.
      
      \end{Proposition}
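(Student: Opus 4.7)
The plan is to exhibit, for each subset $S \subseteq \{1,\ldots,N\}$, an explicit semi-algebraic parametrization $\phi_S \colon \tX \to \tX\times\tX$ whose image lies in $\tI$, and then to show that $\tI$ is exactly the union of the $2^N$ images $\tI_S := \phi_S(\tX)$. Since $\tX \cong \R_{>0}\times(\C^\times)^N$ is irreducible and connected, each $\tI_S$ will inherit those properties, and distinct subsets $S$ will produce distinct components on a dense open set. The piece $\tI_\emptyset$ is the diagonal, and each other $\tI_S$ is semi-algebraically isomorphic to it via $\phi_S$.

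First I would expand, as in the proof of Theorem \ref{thm.coveruniqueness},
$$|\hat x(\omega)|^2 = \omega^{-N}\Bigl(|a_0|^2\prod_{i=1}^N \overline{\beta_i}\Bigr)\prod_{i=1}^N(\omega-\beta_i)(\omega-\overline{\beta_i}^{-1}),$$
and likewise for $|\hat{x'}(\omega)|^2$ in terms of $a_0'$ and $\beta_i'$. For any $(\tx,\tx') \in \tI$, set $S := \{n : \beta_n' = \overline{\beta_n}^{-1}\}$ with the convention $\beta_n' = \beta_n$ otherwise. The hypothesis $\beta_n' \in \{\beta_n, \overline{\beta_n}^{-1}\}$ automatically forces the monic linear factors to agree, so equating leading coefficients gives $|a_0|^2 \prod_i \overline{\beta_i} = |a_0'|^2 \prod_i \overline{\beta_i'}$. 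Substituting $\overline{\beta_n'} = \beta_n^{-1}$ for $n \in S$ and simplifying yields the clean relation $|a_0'| = |a_0| \prod_{n \in S} |\beta_n|$, which determines $a_0'$ uniquely from $a_0 \in \R_{>0}$ and $S$.

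Next, for each $S$ I would define the semi-algebraic map
$$\phi_S(a_0,\beta_1,\ldots,\beta_N) = \Bigl((a_0,\beta_1,\ldots,\beta_N),\bigl(a_0\prod_{n\in S}|\beta_n|,\beta_1',\ldots,\beta_N'\bigr)\Bigr),$$
with $\beta_n' = \overline{\beta_n}^{-1}$ when $n \in S$ and $\beta_n' = \beta_n$ otherwise. The map $\phi_S$ is semi-algebraic (since $|\,\cdot\,|$ and $\beta\mapsto\overline\beta^{-1}$ are), projection onto the first factor inverts it, and its image $\tI_S$ lies in $\tI$ by the Fourier-intensity identity above. The previous step shows conversely that every point of $\tI$ lies in some $\tI_S$, so $\tI = \bigcup_S \tI_S$.

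Finally, each $\tI_S$ is connected and irreducible because it is semi-algebraically homeomorphic to $\tX$, and on the Zariski-dense open semi-algebraic subset where $|\beta_n|\neq 1$ for every $n$ the indexing set $S$ is uniquely recovered from $(\tx,\tx')$ as $\{n : \beta_n' \neq \beta_n\}$; so distinct $S$ yield distinct $\tI_S$. The main (and really only) subtlety is that $\tI_S$ and $\tI_{S'}$ can meet at boundary points where $|\beta_n|=1$ for some $n \in S \triangle S'$, since there $\beta_n$ and $\overline{\beta_n}^{-1}$ coincide; I would handle this by observing that the overlap is cut out inside each $\tI_S$ by the proper semi-algebraic equation $\prod_{n \in S\triangle S'}(|\beta_n|^2-1) = 0$, hence cannot contain an irreducible component. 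Therefore the $\tI_S$ are precisely the $2^N$ irreducible components of $\tI$, each semi-algebraically isomorphic to the diagonal copy of $\tX$.
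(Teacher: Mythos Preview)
Your proposal is correct and follows essentially the same route as the paper: both arguments index the pieces of $\tI$ by subsets $S\subseteq\{1,\dots,N\}$, write down the explicit semi-algebraic parametrization $\tX\to\tI_S$ determined by $\beta_n'\in\{\beta_n,\overline{\beta_n}^{-1}\}$ together with the forced value of $a_0'$, and conclude irreducibility and connectedness from the isomorphism with $\tX$. Your formula $a_0'=a_0\prod_{n\in S}|\beta_n|$ is in fact the cleaner (and correct) version of the paper's expression. One small imprecision: the overlap $\tI_S\cap\tI_{S'}$ is cut out by the \emph{simultaneous} equations $|\beta_n|^2=1$ for all $n\in S\triangle S'$, not by the single product equation you wrote (the product vanishing is a weaker condition); but since even that larger locus is proper in $\tI_S$, your conclusion that no $\tI_S$ is contained in another---and hence that the $\tI_S$ are exactly the irreducible components---still goes through.
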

        
        \begin{proof}
          Let $\tilde{x} = (a_0, \beta_1, \ldots , \beta_N)$ and
          $\tilde{x'} = (a'_0, \beta'_1, \ldots , \beta'_N)$
          be vectors in $\tilde{X}$ and let $x = (a_0, a_1, \ldots , a_N)$
          and $x' = (a'_0, a'_1, \ldots , a'_N)$ be their images in $X$.
          By the proof of Theorem \ref{thm.coveruniqueness} we know that $|\hat{x}(\omega)|^2  =
          |\hat{x}(\omega)|^2$ if and only if after possibly reordering the
          $\beta_i$ there exists a subset
          $S \subset \{1, \ldots, N\}$ such that $\beta'_i = \overline{\beta_i}^{-1}$ for $i \in S$ and $\beta'_i = \beta_i$ if $i \in S^c$ and
          $\prod_{i=1}^N {\beta_i\over{\beta'_i}}  = (a_0/a'_0)^2$.

          Hence $\tilde{I}$ is the union of $2^N$ closed real algebraic subsets
          indexed by subsets of $\{1, \ldots , N\}$. Specifically if $S$ is a subset then
          we let 
          $$\tilde{I}_S = \{(a_0, \beta_1, \ldots , \beta_N), (a'_0, \beta'_1, \ldots , \beta'_N)| \beta'_i = \overline{\beta_i}^{-1} \;\; \text{for}\; i \in S, \beta'_j = \beta_j\;\;\text{for}\; j \notin S,
          \prod_{i=1}^N {\beta_i\over{\beta'_i}}  = (a_0/a'_0)^2\}.$$

          Each of the $\tilde{I}_S$ is connected and irreducible because there is a semi-algebraic isomorphism
          $\tilde{X} \to \tilde{I}_S$ and $\tilde{X}$ is connected and irreducible. The isomorphism is given by the formula
          $$(a_0, \beta_1, \ldots , \beta_N) \mapsto \left((a_0, \beta_1, \ldots , \beta_N), (a_0', \beta'_1, \ldots ,
          \beta'_N)\right)$$ where $\beta'_i = \overline{\beta_i}^{-1}$ if $i \in S$,  $\beta'_i = \beta_i$ if $i \notin S$ and
          $$ a'_0 = \sqrt{a_0 \left( \prod_{i=1}^N {\beta'_i \over{\beta_i}}\right)}.$$
          
\end{proof}
\begin{Remark}
        Note that the $\tilde{I}_S \cap \tilde{I}_S'$ can be identified with the real subvariety
        of $\tilde{X}$ consisting of tuples $\tilde{x} = (a_0, \beta_1, \ldots , \beta_N)$ where $|\beta_i| = 1$ for $i\in
       (S \cup S') \smallsetminus
        (S \cap S')$. Hence
        $\cap_{S} \tilde{I}_S$ can be identified with $\R_{>0} \times (S^1)^N$, corresponding to vectors all of whose Fourier roots lie on the unit circle.
        \end{Remark}
          
  \subsection{Proof of Theorem \ref{thm.incidence}}
  To prove the theorem we need to understand the images in $I$
  of the irreducible components $\tilde{I}_S$ of $\tilde{I}$.

  \begin{Lemma}
    The image of $\tilde{I}_S$ equals the image of $\tilde{I}_{S'}$ if and only
    $|S|=|S'|$.
  \end{Lemma}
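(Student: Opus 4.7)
The plan is to exploit the identification $X\times X = (\tX\times\tX)/(S_N\times S_N)$, so that the images of $\tI_S$ and $\tI_{S'}$ in $X\times X$ coincide if and only if they lie in a common $(S_N\times S_N)$-orbit. I will show that the diagonal action of any $\sigma\in S_N$ carries $\tI_S$ onto $\tI_{\sigma(S)}$, which handles the ``if'' direction; then a generic point argument will force every equivalence between $\tI_S$ and $\tI_{S'}$ to be of this diagonal form, yielding $|S|=|S'|$.

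For the ``if'' direction, suppose $|S|=|S'|$ and pick $\sigma\in S_N$ with $\sigma(S)=S'$. Write $\tx = (a_0,\beta_1,\dots,\beta_N)$ and $\tx'=(a_0',\beta_1',\dots,\beta_N')$ for a point of $\tI_S$. A direct check shows that the diagonal action $(\tx,\tx')\mapsto(\sigma\tx,\sigma\tx')$ carries this point into $\tI_{S'}$: the $i$-th root of $\sigma\tx$ is $\beta_{\sigma^{-1}(i)}$ and of $\sigma\tx'$ is $\beta'_{\sigma^{-1}(i)}$, and these are equal precisely when $\sigma^{-1}(i)\notin S$, equivalently $i\notin\sigma(S)=S'$, and are inverse conjugates otherwise. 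The scalar identity $\prod_j\beta_j/\beta_j'=(a_0/a_0')^2$ is permutation-invariant, so it transfers unchanged. Since $\tilde\Phi\times\tilde\Phi$ is constant on diagonal $S_N$-orbits, the images of $\tI_S$ and $\tI_{S'}$ in $X\times X$ agree.

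For the ``only if'' direction, assume the images coincide. Choose a point $(\tx,\tx')\in\tI_S$ that is generic in the sense that the $\beta_i$ are pairwise distinct, none lies on the unit circle, and $\beta_i\overline{\beta_j}\neq 1$ whenever $i\neq j$. Each excluded condition is a real polynomial equation that carves a proper closed subset from the irreducible semi-algebraic set $\tI_S$ (irreducibility being established in the preceding proposition), so such generic points are dense. By hypothesis there exists $(\tilde y,\tilde y')\in\tI_{S'}$ mapping to the same point of $X\times X$; hence $\tilde y=\sigma\tx$ and $\tilde y'=\tau\tx'$ for some $\sigma,\tau\in S_N$. Membership of $(\sigma\tx,\tau\tx')$ in $\tI_{S'}$ requires, for each index $i$, that $\beta'_{\tau^{-1}(i)}$ equals $\beta_{\sigma^{-1}(i)}$ (when $i\notin S'$) or $\overline{\beta_{\sigma^{-1}(i)}}^{-1}$ (when $i\in S'$), while $\beta'_{\tau^{-1}(i)}$ itself is either $\beta_{\tau^{-1}(i)}$ or $\overline{\beta_{\tau^{-1}(i)}}^{-1}$ according as $\tau^{-1}(i)\notin S$ or $\tau^{-1}(i)\in S$. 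Genericity eliminates the two ``mixed'' cases (which would match some $\beta_i$ to the inverse conjugate of a different $\beta_j$), leaving only $\tau^{-1}(i)=\sigma^{-1}(i)$ together with the matching condition $\tau^{-1}(i)\in S \iff i\in S'$. Consequently $\tau=\sigma$ and $S'=\sigma(S)$, so $|S|=|S'|$.

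The main obstacle is the combinatorial case analysis in the ``only if'' step: to conclude $\tau=\sigma$ one must carefully rule out the crossed matchings where a $\beta_i$ coincides with the inverse conjugate of another $\beta_j$. Identifying exactly which polynomial conditions capture these degeneracies, and verifying that they cut out a proper closed subset of $\tI_S$ (so that generic points genuinely exist), is the only substantive bookkeeping; after that, the remaining matching condition gives $\sigma(S)=S'$ index by index.
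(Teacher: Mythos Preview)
Your proof is correct and rests on the same core idea as the paper's: for generic roots $\beta_i$, any identification between the images of $\tI_S$ and $\tI_{S'}$ must come from a permutation sending $S$ to $S'$. The ``if'' direction is identical to the paper's. For ``only if'' the paper argues by contrapositive---assuming $|S|\neq |S'|$, it reduces to $S=\{1,\dots,k\}\subsetneq S'=\{1,\dots,l\}$ and exhibits a specific point of $\tI_{S'}$ (taking the $\beta_i$ to be distinct positive reals greater than $1$) whose image is asserted not to lie in the image of $\tI_S$---whereas you argue directly, assuming the images agree and forcing $\sigma=\tau$ and $S'=\sigma(S)$ via a case analysis. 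Your version is more explicit about exactly which genericity conditions are needed (in particular the condition $\beta_i\overline{\beta_j}\neq 1$ for $i\neq j$, which the paper's stated hypotheses ``distinct and off the unit circle'' do not by themselves guarantee, though its concrete choice of reals $>1$ does), and you spell out why the crossed matchings are excluded; the paper leaves this verification to the reader. Substantively the arguments are the same.
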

  \begin{proof}
    If $|S| = |S'|$ then there is a permutation $\tau \in S_N$ such
    that $\tau(S) = S'$. Under the diagonal action of $S_N$
    on $\tilde{I}$ given by
    $$\tau\left((a_0, \beta_1, \ldots, \beta_N), (a_0', \beta'_1, \ldots , \beta'_N)\right)=
    \left((a_0, \beta_{\tau(1)}, \ldots, \beta_{\tau(N)}), (a_0',
    \beta'_{\tau(1)}, \ldots , \beta'_{\tau(N)})\right)$$
    $\tilde{I}_S$ is mapped to $\tilde{I}_{S'}$. Since the map
    $\tilde{I} \to I$ obtained by restricting $\tilde{\Phi} \times \tilde{\Phi}$ to $\tilde{I}$
    is $S_N$ invariant, it follows that
    $\tilde{I}_S$ and $\tilde{I}_{S'}$ have the same image in $I$.

    Conversely suppose that $|S| \neq |S'|$. Without loss of generality we may assume that $|S| < |S'|$. Also we can find a permutation $\tau$ such that
    $\tau(S)$ is a proper subset of $\tau(S')$. Applying another permutation allows us to assume that $S =\{1, \ldots , k\}$ and $S'= \{1, \ldots , l\}$
    with $l > k$.
    
    If $\beta_1, \ldots, \beta_N$ are chosen to be distinct and none of them lie on the unit circle (for example we can take the $\beta_i$ to be positive real numbers more than 1) then the image of the pair
    $$(\tilde{x}, \tilde{x'}) = \left( (a_0,\beta_1, \ldots ,\beta_N), 
    (a_0', \overline{\beta_1}^{-1}, \ldots , \overline{\beta_l}^{-1}, \beta_{l+1}, \ldots , \beta_N) \right) \in \tilde{I}_{S'}$$ is not in the image of $\tilde{I}_S$.
    Likewise,
     $$(\tilde{x}, \tilde{x'}) = \left( (a_0,\beta_1, \ldots ,\beta_N), 
    (a_0', \overline{\beta_1}^{-1}, \ldots , \overline{\beta_k}^{-1}, \beta_{k+1}, \ldots , \beta_N) \right) \in \tilde{I}_{S}$$ is not in the image of $\tilde{I}_{S'}$.

  \end{proof}
  \begin{proof}[Proof Theorem \ref{thm.incidence}]
(i)  Since each $\tilde{I}_S$ is irreducible and connected, their images are irreducible so
  $I$ consists of $N+1$ irreducible and connected components $I_0, \ldots , I_N$
  where $I_k$ is the image of $\tilde{I}_S$ for any subset $S \subset \{1, \ldots, N\}$
  such that $|S| = k$. (This includes the empty set.)

  \medskip

  (ii,iii) We now compute the degree of the projection $I_k \to X$. We know that
  if $S$ is any subset with $|S| = k$ then the map
  $\tilde{I}_S \to I_k \to X$ has degree $N!$ since $I_S$ is homeomorphic to $\tilde{X}$.
  Two general elements of $\tilde{I}_S$ have the same image in $I_k$ if and only
  if there is a permutation $\tau \in S_N$ such that $\tau(S) = S$
  and $\tau(S^c) = S^c$. Hence $I_k$ may be identified with the quotient
  of $\tilde{I}_S$ by a subgroup of $S_N$ isomorphic to $S_k \times S_{N-k}$.
  Hence the degree of of the map $\tilde{I}_S \to I_k$ is $k!(N-k)!$. Since
  the degree of a finite map is multiplicative it follows that
  the degree of the map $I_k \to X$ equals to ${N!\over{k! (N-k)!}} = {N \choose k}$.

  \medskip

  (iv)
  The involution (order two automorphism) of $\tilde{I}$
   given by
   $$\left((a_0, \beta_1, \ldots , \beta_N),(a'_0, \beta'_1, \ldots ,
   \beta'_N)\right) \mapsto
   \left((a_0, \beta_1, \ldots , \beta_N),(a'_0\overline{\beta_1}'
   \ldots \overline{\beta_N}', (\overline{\beta'}_1)^{-1}, \ldots,
   (\overline{\beta'}_N)^{-1}\right))$$
   takes ${\tilde I}_S \to {\tilde I}_{S^c}$.
   Given $(\tilde{x}, \tilde{x}') \in \tilde{I}$, let
   $(\tilde{x}, \tilde{\dot{x}}')$ be its image under the involution.
   If $(x,x')$ is the image in $I$ of $(\tilde{x}, \tilde{x'})$ then
   the image in $I$ of $(\tilde{x},\tilde{\dot{x}}')$ is $(x,\dot{x}')$ where
   $\dot{x}'$ is obtained by conjugation and reflection.
   If $|S| = k$ then $|S^c| = |N-k|$, so we see that $I_k \mapsto I_{N-k}$
   under the involution $(x,x') \mapsto (x,\dot{x}')$.

   \medskip
   
(v)  Given $x \in X$, let $\beta_1, \ldots , \beta_N$ be the roots of the Fourier polynomial $\hat{x}(\omega)$.  For generic $x$ none of the roots
  $\beta_1, \ldots ,\beta_N$ lie on the unit circle. If $(x,x') \in I_k$ and $\beta'_1, \ldots , \beta'_N$
  are the roots of $\hat{x}'(\omega)$ then there is a subset $S \subset \{1, \ldots , N\}$ such that $\beta'_i = \overline{\beta_i}^{-1}$ for $i \in S$
  and $\beta'_i = \beta_i$ for $i \in S^{c}$. If none of $\beta_1, \ldots \beta_N$
  lie on the unit circle in the complex plane, then by
  Lemma \ref{lem.rootsofthedot}, $x' \neq \dot{x}$ unless $S = \{1, \ldots , N\}$ meaning $|S| = N$. Hence
  if $0 < k < N$ then for a generic pair $(x,x') \in I_k$, $x' \neq x$ and
  $x' \neq \dot{x}$.
\end{proof}

  \subsection{Characterization of the components of $I$ in terms of convolution} \label{sec.convolution}
In \cite[Theorem 2.3]{BePl:15}, Beinert and Plonka prove that 
 two signals $x$ and $y$ have the same Fourier intensity function
  if and only if there exist finite signals $x_1, x_2$ such that $x = x_1 \star x_2$
  and $y= \lambda x_1 \star \dot{x_2}$ for some $\lambda \in S^1$.
  
  Their result can be made more precise by using our analysis of the
  irreducible components of the incidence variety.
  \begin{Theorem} \label{thm.convolution}
    The component $I_k \subset I$ parametrizes all pairs of equivalence classes
    $(x,x')$
    such that there exist vectors $x_1\in \C^{k+1}, x_2 \in \C^{N-k+1}$ such
    that $x = x_1 \star x_2$ and $x' = x_1 \star \dot{x_2}$.
  \end{Theorem}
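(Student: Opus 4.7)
The plan is to prove both inclusions by a root-level calculation, combining the description of $I_k$ from Theorem \ref{thm.incidence} with the polynomial identity $\widehat{x_1 \star x_2}(\omega) = \hat{x_1}(\omega) \cdot \widehat{\overline{x_2}}(\omega)$, which follows directly from unfolding the definition of $\star$.

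I would first show the convolution-to-incidence direction. Let $x_1 \in \C^{k+1}, x_2 \in \C^{N-k+1}$ have Fourier polynomials with roots $\alpha_1, \ldots, \alpha_k$ and $\gamma_1, \ldots, \gamma_{N-k}$ respectively. Coefficient-wise conjugation in $\overline{x_2}$ conjugates the roots, so $\widehat{\overline{x_2}}$ has roots $\overline{\gamma_j}$, and $\hat{x}$ for $x = x_1 \star x_2$ has root set $\{\alpha_i\}_i \cup \{\overline{\gamma_j}\}_j$. For $x' = x_1 \star \dot{x_2}$, Lemma \ref{lem.rootsofthedot} gives the roots of $\hat{\dot{x_2}}$ as $\overline{\gamma_j}^{-1}$, so conjugating coefficients once more shows $\widehat{\overline{\dot{x_2}}}$ has roots $\gamma_j^{-1}$. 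Hence the roots of $\hat{x}$ and $\hat{x'}$ agree on the contribution from $x_1$ and differ by inverse-conjugation on the contribution from $x_2$. This exhibits a lift of $(x,x')$ to some $\tilde{I}_S$, placing $(x,x')$ in the component of $I$ determined by $|S|$ via Theorem \ref{thm.incidence}.

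For the reverse direction I lift $(x, x') \in I_k$ to $(\tilde{x}, \tilde{x}') \in \tilde{I}_S$. After permuting the roots (an ambiguity of the root cover), the roots of $\hat{x}$ split into \emph{fixed} roots (where $\beta_i' = \beta_i$) and \emph{inverted} roots (where $\beta_i' = \overline{\beta_i}^{-1}$). I define $\hat{x_1}$ (up to scalar) as the polynomial whose roots are the fixed $\beta_i$'s, and $\hat{x_2}$ as the polynomial whose roots are the conjugates of the inverted $\beta_i$'s. Running the previous root computation in reverse verifies $\hat{x} = \hat{x_1} \widehat{\overline{x_2}}$ and $\hat{x'} = \hat{x_1} \widehat{\overline{\dot{x_2}}}$. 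The leading scalars are pinned down up to an element of $S^1$, which is absorbed into the equivalence class of signals in $X$ modulo global phase, exactly as in the final scalar adjustment in the proof of Theorem \ref{thm.coveruniqueness}.

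The main technical obstacle is that Lemma \ref{lem.rootsofthedot} presumes $\dot{x_2}$ itself has full support (equivalently, that the second entry of $x_2$ is nonzero), which fails on a proper subvariety. I would carry out the identification on the Zariski-open dense locus where this condition holds, then use the irreducibility of $I_k$ from Theorem \ref{thm.incidence} together with the closedness of the image of the convolution map $\C^{k+1} \times \C^{N-k+1} \to X \times X$ to extend the characterization to all of $I_k$.
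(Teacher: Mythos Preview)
Your approach is the same as the paper's: factor the Fourier polynomials via their roots, use that convolution multiplies Fourier polynomials while the dot operation inverse-conjugates roots (Lemma~\ref{lem.rootsofthedot}), and match against the description of $\tilde{I}_S$ from the proof of Theorem~\ref{thm.incidence}. You are in fact more careful than the paper about the conjugation built into $\star$; just watch the bookkeeping, since as written your root count places $(x,x')$ in $I_{N-k}$ rather than $I_k$ (the paper's own proof has the same slip), so either swap the roles of $x_1,x_2$ or invoke the symmetry of Theorem~\ref{thm.incidence}(iv).
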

  
    
  \begin{proof}
    If $x = x_1 \star x_2$ then $\hat{x}(\omega) = \hat{x_1}(\omega)
    \hat{x_2}(\omega)$.
    Thus if $\hat{x_1}(\omega) = a_0(\omega - \beta_1)\ldots
    (\omega - \beta_k)$ and
    $\hat{x_2}(\omega) = a'_0(\omega -\beta_{k+1})\ldots (\omega - \beta_{N-k})$
    then
    $\hat{x}(\omega) = (a_0a'_0 \omega^{-1}) (\omega - \beta_1) \ldots (\omega - \beta_N)$.

    Similarly if $x' = x_1 \star \dot{x}_2$ then
    $\hat{x'} = (\overline{\beta_{k+1}} \ldots \overline{\beta_{N-k}} )(\omega - \beta_1) \ldots (\omega - \beta_k)
    (\omega - \overline{\beta}^{-1}_{k+1}) \ldots (\omega - \overline{\beta}^{-1}_N)$. Hence $(x, x') \in I_k$. The converse is similar.
  \end{proof}
  \begin{Remark}
    Theorem \ref{thm.convolution} above says that we can identify $I_k$
    with the image of $\C^{k+1} \times \C^{N-k+1}$ under the map
    $(x_1, x_2) \mapsto \left((x_1 \star x_2), (x_1 \star \dot{x}_2)\right)$.
  \end{Remark}

  \section{Phase retrieval for vectors satisfying an algebraic condition}
  We can use our description of the incidence variety to prove that the generic vector satisfying any algebraic constraint can be uniquely recovered from its Fourier intensity function, provided there exists one such vector. Examples
  include vectors with a fixed entry or sparse vectors.
  This technique for multi-vectors
  played a crucial role in the paper \cite{BEE:18} on STFT.
      
\begin{Theorem}[Phase retrieval for vectors satisfying an algebraic condition] \label{Thm:GenericUniqueness}
      Let $W \subset X$ be a real subvariety of $X$ and suppose that
      there exists a point $w_0 \in W$ such that for all $(w_0,w_0')
      \in \pi^{-1}(w_0) \smallsetminus (I_0 \cup I_N$), $w_0' \notin
      W$ a generic $w \in W$ can be recovered up to global phase from
      its Fourier intensity function $|\hat{w}(\omega)|^2$.

    If the condition holds for all $(w_0,w_0') \in \pi^{-1}(w_0)
    \smallsetminus I_0$ then a generic $w \in W$ can be recovered up
    to trivial ambiguities.  Here $\pi \colon I \to X$ is the
    projection onto the first factor.
    \end{Theorem}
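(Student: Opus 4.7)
The plan is to show the ``bad'' subset $B \subset W$, on which the desired uniqueness fails, is contained in a proper real algebraic subset of $W$. Let $K$ be the indices of ambiguity forbidden by the hypothesis: $K = \{1, \ldots, N-1\}$ in the first statement (where $I_0 \cup I_N$ is excluded) and $K = \{1, \ldots, N\}$ in the second (where only $I_0$ is excluded). For each $k \in K$, set
\[
Z_k := I_k \cap (W \times W) \quad\text{and}\quad B_k := \pi(Z_k) \subset W,
\]
so that $B_k$ is the set of $w \in W$ having an $I_k$-partner inside $W$. The hypothesis translates to $w_0 \notin B_k$ for every $k \in K$. By the dimension criterion for semi-algebraic subsets of an algebraic set, it suffices to show $\dim B_k < \dim W$ for each $k \in K$; then $B := \bigcup_{k \in K} B_k$ is contained in a proper real algebraic subvariety $V \subsetneq W$, and any $w \in W \setminus V$ has only the permitted partners in $W$.

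For the dimension bound I use Theorem \ref{thm.incidence}(ii): $\pi \colon I_k \to X$ is a finite covering of degree $\binom{N}{k}$, so $\widetilde{W}_k := \pi^{-1}(W) \cap I_k$ is a finite covering of $W$ and has dimension $\dim W$. Decomposing $\widetilde{W}_k = \bigcup_j C_j$ into irreducible real algebraic components, and using $Z_k = \widetilde{W}_k \cap \pi_2^{-1}(W)$, one obtains
\[
Z_k = \bigcup_{j \,:\, \pi_2(C_j) \subset W} C_j \;\cup\; \bigcup_{j \,:\, \pi_2(C_j) \not\subset W} \bigl( C_j \cap \pi_2^{-1}(W) \bigr).
\]
Each piece of the second union is a proper algebraic subset of an irreducible variety and so has dimension strictly less than $\dim W$. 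Hence $\dim Z_k = \dim W$ only if some component $C_j$ lies entirely inside $W \times W$; if no such component exists, $\dim B_k \leq \dim Z_k < \dim W$ and we are done for that $k$.

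The crux of the argument --- and the principal obstacle --- is ruling out such a component $C_j$ using the witness $w_0$. If $C_j \subset W \times W$ with $\dim C_j = \dim W$, then $\pi|_{C_j}\colon C_j \to W$ is a proper, finite-to-one map onto a closed irreducible semi-algebraic subset of $W$ of full dimension. To conclude that this image is actually $W$ (so that $w_0 \in \pi(C_j) \subset B_k$, contradicting the hypothesis), one has to overcome the fact that in real algebraic geometry a proper finite-to-one map between irreducible varieties of the same dimension need not be surjective (the image can be a closed semi-algebraic subset that misses $w_0$). The natural remedy is to invoke the complex-algebraic structure furnished by the root covering (Proposition \ref{prop.rootcover}), which realizes the relevant pieces of $I_k \to X$ as finite morphisms of complex algebraic varieties; in that setting a dominating irreducible component of the preimage of an irreducible base does surject onto it, and then a descent to real points (using that the defining equations of $C_j$ are real) places $w_0$ in $\pi(C_j)$ and completes the contradiction. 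Once $\dim B_k < \dim W$ is established for every $k \in K$, the finite union $B$ lies in a proper real algebraic subvariety of $W$, and the generic $w \in W$ has only partners in $W$ coming from $I_k$ with $k \notin K$, yielding the desired generic recoverability.
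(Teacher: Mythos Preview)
Your overall strategy matches the paper's: define the bad locus $I_W = I^0 \cap (W \times W)$ (your $\bigcup_{k\in K} Z_k$), pull back $W$ along the finite cover $\pi$, decompose $\pi^{-1}(W)$ into irreducible components, and use the witness $w_0$ to force each component to meet $W \times W$ properly so that $\dim I_W < \dim W$ and hence $\pi(I_W)$ lies in a proper subvariety of $W$. The paper compresses exactly this into two sentences, asserting that the hypothesis on $w_0$ ``implies that $W \times W$ intersects each irreducible component of $\pi^{-1}(W)$ in a proper algebraic subset.''

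You are right that this assertion hides a real-algebraic subtlety: a full-dimensional irreducible component $C_j$ of $\pi^{-1}(W)$ need not satisfy $\pi(C_j)=W$, so the fiber over $w_0$ might miss $C_j$ entirely, and the paper does not address this point. Your proposal goes further than the paper by naming the issue, but your remedy is itself only a sketch. The first projection $\pi\colon I_k \to X$ does arise from complex-algebraic data (it is the map $\tilde X/(S_k\times S_{N-k}) \to \tilde X/S_N$, and $\tilde\Phi$ is polynomial), so complex going-down applies over the complexification of $W$; however, the ``descent to real points'' you invoke is precisely the delicate step --- real irreducible components of $\pi^{-1}(W)$ need not correspond to complex irreducible components of the complexified preimage in a way that transports surjectivity back to the real locus containing $w_0$. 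To close the argument you would have to make that descent explicit, or alternatively exploit that $\pi$ is open and closed so that \emph{connected} components of $\pi^{-1}(W)$ surject onto connected components of $W$, and then control how irreducible and connected components relate. As written, your argument and the paper's share the same gap; you simply make it visible.
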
 \label{Thm.GenericUniqueness}
    \begin{proof}[Proof of Theorem]
  Let $I^0 = \overline{I \smallsetminus
      (I_0 \cup I_N)}$. Since $I^0$ is closed  the map
    $I^0 \to X$ is still finite. Let $I_W = I^0 \cap (W \times W)$
    be the real algebraic subset
    of $I^0$ consisting of pairs $(w,w')$ with $w,w'$ both in $W$.
    The image of $I_W$ under the projection $\pi \colon I \to X$
    is the set of $w \in W$ which cannot be recovered up to trivial ambiguity
    from $|\hat{w}(\omega)^2|$. We will show that $W \smallsetminus I_W$
    is Zariski dense.

    By
    assumption there exists $w_0 \in W$ such that for all pairs $(w_0,w_0') \in 
I^0$,
    $w_0' \notin W$. This implies that $W \times W$ intersects each irreducible
    component of $\pi^{-1}(W)$ in a proper algebraic subset. Hence,
    $\dim I_W < \dim \pi^{-1}(W) = \dim W$. 
    Thus,  $\dim \pi(I_W) < \dim W$ so $\pi(I_W)$ is contained in a proper algebraic subset of $W$. Hence
    the complement of $\pi(I_W)$ is dense in the real Zariski topology on
    $W$.
    \end{proof}

  \subsection{Imposing uniqueness with additional conditions}
  Using Theorem \ref{Thm:GenericUniqueness} we can show that a signal
  can be recovered modulo trivial ambiguities from the Fourier intensity
  function and the absolute value of a single entry. We illustrate
  with the following Corollary which is also proved in \cite[Corollary
    4.4]{BePl:15}. See the paper \cite{BePl:18} for more conditions which 
impose uniqueness.
  
  \begin{Corollary}\cite[Corollary 4.4]{BePl:15}
    For generic $x \in \C^\times \times \C^{N-1} \times \C^\times$
    the system of equations
\begin{eqnarray*}
    |\hat{x'}(\omega)|^2 & = &|\hat{x}(\omega)|^2\\
  |x'[N]| & = & |x[N]|
  \end{eqnarray*}
    has a unique solution modulo global phase. If $x'[N] = x[N]$
    then the solution is unique.
\end{Corollary}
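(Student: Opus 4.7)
My plan is to apply Theorem \ref{Thm:GenericUniqueness} to the real algebraic subvariety
\[
W_c := \{x \in X : |x[N]|^2 = c^2\} \subset X,
\]
where $c > 0$ is fixed. Since the corollary asks for uniqueness modulo global phase, I will invoke the form of the theorem whose hypothesis is that there exists $w_0 \in W_c$ with $w_0' \notin W_c$ for every $(w_0, w_0') \in \pi^{-1}(w_0) \smallsetminus I_0$ (i.e., for every nonidentity ambiguity of $w_0$, including the reflection--conjugation ambiguity in $I_N$); the proof of that theorem, applied with $I^0 = \overline{I \smallsetminus I_0}$ in place of $\overline{I \smallsetminus (I_0\cup I_N)}$, then concludes that generic $w \in W_c$ is the unique element of $W_c$ with its Fourier intensity, which is precisely uniqueness modulo global phase after lifting from $X$ to the signal space. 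It therefore suffices to produce one suitable $w_0$.

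The key identity will express $|w_0'[N]|$ in terms of the roots of $\hat{w_0}$. Writing $\hat{w_0}(\omega) = w_0[N] \prod_i(\omega - \beta_i)$ and $\hat{w_0'}(\omega) = w_0'[N] \prod_i(\omega - \beta_i')$ with $\beta_i' = \overline{\beta_i}^{-1}$ for $i \in S$ and $\beta_i' = \beta_i$ for $i \notin S$, and comparing the extremal coefficients in the Laurent expansions of $|\hat{w_0}|^2$ and $|\hat{w_0'}|^2$ (as in the proof of Theorem \ref{thm.coveruniqueness}), one obtains
\[
|w_0'[N]| \;=\; |w_0[N]| \prod_{i \in S} |\beta_i|,
\]
so that $w_0' \in W_c$ iff $\prod_{i \in S} |\beta_i| = 1$.

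To arrange that this equation fails for every nonempty $S \subseteq \{1, \ldots, N\}$, take $w_0$ to be the signal with Fourier polynomial $c \prod_{i=1}^N (\omega - p_i)$, where $p_1, \ldots, p_N$ are the first $N$ primes; then $w_0[N] = c$ and every nonempty subproduct of distinct primes is an integer at least $2$, hence never equal to $1$. This handles both the non-trivial ambiguities ($1 \leq |S| \leq N-1$) and the reflection--conjugation ambiguity ($|S| = N$, giving $w_0' = \dot{w_0}$), so $w_0$ has the required property, and Theorem \ref{Thm:GenericUniqueness} yields the first claim of the corollary. For the second claim, combining $x'[N] = x[N]$ with $x' = e^{\iota\theta} x$ (from the first claim) forces $e^{\iota\theta} = 1$ since $x[N] \neq 0$, hence $x' = x$. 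The only substantive step is the identity $|w_0'[N]| = |w_0[N]|\prod_{i\in S}|\beta_i|$; once that is in hand, the prime construction and the rest of the argument are routine.
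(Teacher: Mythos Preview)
Your proof is correct and follows the paper's overall strategy---apply Theorem~\ref{Thm:GenericUniqueness} to the hypersurface $W_c=\{|x[N]|=c\}$ by exhibiting a single good witness $w_0$---but you choose and verify the witness differently. The paper takes $w_0=(a',0,\ldots,0,a)$ with $a'\neq a$ and argues directly from the coefficients of $|\hat{w_0}|^2$: the $\omega^{\pm N}$ coefficient forces $|w_0'[0]\,w_0'[N]|=aa'$, and the constant (Parseval) term then forces the remaining entries of $w_0'$ to vanish, giving $w_0'=w_0$. You instead establish the identity $|w_0'[N]|=|w_0[N]|\prod_{i\in S}|\beta_i|$ on the root cover and choose $w_0$ with prime roots so that no nonempty subproduct of moduli equals $1$. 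Your formula is more conceptual---it pinpoints exactly when the side constraint $|x'[N]|=|x[N]|$ eliminates an ambiguity and shows that any $w_0$ with all $|\beta_i|>1$ (or all $<1$) already works; indeed the paper's example falls under this, since the roots of $a'+a\omega^N$ all have modulus $(a'/a)^{1/N}\neq 1$. The paper's argument, by contrast, stays entirely in the signal domain and needs no factorization. Your explicit handling of the case $|S|=N$ (so that the reflection $\dot{w_0}$ is also excluded from $W_c$) is a nice touch that aligns with the ``modulo global phase'' conclusion of the corollary.
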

  \begin{proof}
    Let $|x[N]| = a$ with $a > 0$.
    By Theorem \ref{Thm.GenericUniqueness} it suffices to find a single
    vector $x$ with $x[N] = a$ such that for all $(x,x') \in \pi^{-1}(x) \cap
    (I \smallsetminus I_0\cup I_N)$, $|x'[N]|  \neq a$.

    We do this as follows: Let $x = (a', 0, \ldots , 0, a)$ with $a' >0$
    and not equal to $a$.
The Fourier polynomial $\hx(\omega) = a' + a \omega^N$
    so $|\hx(\omega)^2| =  (a^2 + (a')^2) + (aa')\omega^N + (a'a) \omega^{-N}$.
    If $x' = (a_0, \ldots, a_N)$ has the same Fourier intensity function
    then $a_0\overline{a_N} =aa'$. If $|a_N| = a$
    then $|a_0| = a'$. But
    the constant coefficient is $|a_0|^2 + \ldots |a_N|^2 = a'^2  + a^2$
    so we conclude that all other entries in $x'$ are 0.
    Hence, up to global phase $x' = (a_0,0,  \ldots, 0 , a)$. But
    $a_0 a = a a'$ so $a_0 = a'$; ie $x' = x$.
  \end{proof}
 \subsection{Imposing uniqueness for multivectors}

 Theorem \ref{Thm:GenericUniqueness} can easily be generalized to multi-vectors.
 It is this form of the theorem that was used in \cite{BEE:18}.
 Given positive integers, $N_1, \ldots , N_m$
 let  $X[n] = \C^{N_n+1}/S^1$
Let $I[n] \subset X[n] \times X[n]$
be the incidence variety and let $\pi[n] \colon I[n] \to X[n]$ be the projection
to the first factor. Let $X = X[1] \times \ldots \times X[m]$ and $I =
I[1] \times \ldots I[m]$ be the product of the incidences. Finally let
$\pi \colon I \to X$ be the product of the projections $\pi[n]$.

\begin{Theorem}[Imposing uniqueness for multivectors] \label{thm.generalized}
  Let $W$ be an irreducible algebraic subset of $X$.
Suppose that there exists an $m$-tuple of vectors $w_0\in W$
such that for all $(w_0,w_0') \in \pi^{-1}(w)$, $w'_0$ is not obtained
from $w_0$ by a trivial ambiguity.
Then the generic $m$-tuple 
$w \in W$ can be recovered (up to phase) from the
Fourier intensity functions of its component vectors.\end{Theorem}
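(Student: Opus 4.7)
The plan is to mirror, componentwise, the proof of Theorem~\ref{Thm:GenericUniqueness}, with the product structure of $I = \prod_n I[n]$ replacing the single incidence variety. By Theorem~\ref{thm.incidence} each $I[n]$ decomposes into irreducible connected components $I[n]_0, \ldots, I[n]_{N_n}$, with $I[n]_0$ and $I[n]_{N_n}$ encoding the trivial ambiguities in the $n$-th slot. Consequently the irreducible components of $I$ are the products $I_{\mathbf{k}} = \prod_{n=1}^m I[n]_{k_n}$ indexed by tuples $\mathbf{k}=(k_1,\ldots,k_m)$; call $\mathbf{k}$ \emph{trivial} when every $k_n \in \{0,N_n\}$ and let $I^0 \subset I$ denote the union of the non-trivial components. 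Geometrically $I^0$ is precisely the locus of pairs $(w,w')$ in which at least one coordinate $w'_n$ is a non-trivial ambiguity of $w_n$.

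Next I would observe that each restricted projection $\pi|_{I_{\mathbf{k}}} \colon I_{\mathbf{k}} \to X$ is a finite semi-algebraic covering, being the product of the finite covers $\pi[n]|_{I[n]_{k_n}}$ supplied by Theorem~\ref{thm.incidence}(ii). Invoking the finite-cover dimension proposition from Section~2, the preimage $\pi^{-1}(W) \cap I_{\mathbf{k}}$ of the irreducible real subvariety $W$ is pure-dimensional of dimension $\dim W$, and in particular is non-empty whenever $I_{\mathbf{k}}$ surjects to $X$, which it does.

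The heart of the argument is a direct adaptation of the single-vector proof: fix a non-trivial $\mathbf{k}$ and look at the fiber $\pi^{-1}(w_0) \cap I_{\mathbf{k}}$, which is non-empty because $\pi|_{I_{\mathbf{k}}}$ is a finite cover. Every pair $(w_0, w_0')$ in this fiber has $w_0'$ related to $w_0$ by a non-trivial ambiguity (in some slot $n$ with $k_n \notin \{0,N_n\}$), so the hypothesis forces $w_0' \notin W$. Hence the entire fiber is disjoint from $W \times W$, which shows that $(W \times W) \cap I_{\mathbf{k}}$ is a \emph{proper} real algebraic subset of the pure-dimensional set $\pi^{-1}(W) \cap I_{\mathbf{k}}$, and therefore has dimension strictly less than $\dim W$. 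Taking the finite union over non-trivial $\mathbf{k}$ and projecting to $X$, the image $\pi\bigl((W \times W) \cap I^0\bigr)$ is a semi-algebraic subset of $W$ of dimension strictly less than $\dim W$, hence contained in a proper real algebraic subset of $W$ by the dimension-comparison principle. Its complement is Zariski-dense in $W$ and consists precisely of those tuples whose only intensity-equivalent partners within $W$ are componentwise trivial ambiguities.

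The main obstacle is the same one encountered in the single-vector case: passing from ``proper subset'' to ``strictly smaller dimension'' requires the pure-dimensionality of $\pi^{-1}(W) \cap I_{\mathbf{k}}$, which rests on the finite-cover dimension proposition together with the irreducibility of both $W$ and each component $I_{\mathbf{k}}$. Once that bookkeeping is in place, the remainder is essentially a slotwise rerun of the argument for Theorem~\ref{Thm:GenericUniqueness}.
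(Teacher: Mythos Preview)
Your proposal is correct and follows essentially the same approach as the paper. The paper's own proof is a single sentence referring back to Theorem~\ref{Thm:GenericUniqueness}, and your componentwise decomposition $I_{\mathbf{k}} = \prod_n I[n]_{k_n}$, together with the finite-cover dimension argument and the conclusion that $(W\times W)\cap I^0$ projects into a proper algebraic subset of $W$, is precisely the product version of that argument.
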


      \begin{proof}
        We use the same argument as in the proof of Theorem \ref{Thm:GenericUniqueness} to
        show that the set of $w \in W$ that cannot be recovered from their Fourier intensity function has strictly smaller dimension than $W$.
        \end{proof}
        
     \begin{Example}\cite[Proposition B.1]{BEE:18}
       In \cite{BEE:18} we consider the problem of giving lower bounds on the number of measurements required for blind phaseless STFT for signals
       of length $N$, windows of length $W$ and step size equal
       to $L$. The main result of that paper
       is that $\sim 10N$ measurements are sufficient for generic signal recovery modulo ambiguities and this is independent of the step size or window length.

       As part of the proof we need to show that a generic triple
       $(y_1, y_2, y_3)$ in the subvariety $Z \subset \C^{L+1} \times
       \C^{2L+1} \times \C^{3L+1}$ defined by the system of quadratic
       equations
       $$\{y_1[n] y_3[L+n] = y_2[n]y_2[L+n]\}_{n=0,\ldots , L}$$ is
       uniquely determined up to global phase by the Fourier intensity
       functions of the vectors $y_1, y_2, y_3$ . By Theorem
       \ref{thm.generalized} it suffices to explicitly demonstrate one
       triple $(y_1, y_2, y_3) \in Z$ which is uniquely determined by
       the Fourier intensity functions of the vectors $y_1,y_2,y_3$.
       \end{Example}

\def\cprime{$'$}
 \bibliographystyle{plain}

\end{document}